\begin{document}

\newcounter{lemma}
\newcommand{\lemma}{\par \refstepcounter{lemma}%
{\bf Лемма \arabic{lemma}.}}

\newcounter{corollary}
\newcommand{\corollary}{\par \refstepcounter{corollary}%
{\bf Следствие \arabic{corollary}.}}

\newcounter{remark}
\newcommand{\remark}{\par \refstepcounter{remark}%
{\bf Замечание \arabic{remark}.}}

\newcounter{theorem}
\newcommand{\theorem}{\par \refstepcounter{theorem}%
{\bf Теорема \arabic{theorem}.}}

\newcounter{proposition}
\newcommand{\proposition}{\par \refstepcounter{proposition}%
{\bf Предложение \arabic{proposition}.}}

\newcounter{example}
\newcommand{\example}{\par \refstepcounter{example}%
{\bf Пример~\arabic{example}.}}

\renewcommand{\refname}{\centerline{\bf Список литературы}}

\newcommand{\proof}{{\it Доказательство.\,\,}}

\noindent УДК 517.5

{\bf Е.А.~Севостьянов} (Житомирский государственный университет
имени Ивана Франко, Институт прикладной математики и механики НАН
Украины, г.~Славянск)

\medskip
{\bf Є.О.~Севостьянов} (Житомирський державний університет імені
Івана Фран\-ка, Інститут прикладної математики і механіки НАН
України, м.~Слов'янськ)

\medskip
{\bf E.A.~Sevost'yanov} (Zhytomyr Ivan Franko State University,
Institute of Applied Ma\-the\-ma\-tics and Mechanics of NAS of
Ukraine, Slov'yans'k)

\medskip
{\bf О классах Орлича-Соболева на фактор-пространствах}

\medskip
{\bf Про класи Орліча-Соболєва на фактор-просторах}

\medskip
{\bf On Orlicz-Sobolev classes on factor spaces}

\medskip\medskip
Изучаются фактор-пространства единичного шара размерности, не
меньшей трёх, по некоторой группе мёбиусовых преобразований. Для
отображений таких пространств получена оценка искажения модуля
семейств сфер. В качестве приложений, получены теоремы о локально и
граничном поведении классов Орлича--Соболева фактор-пространств.

\medskip\medskip
Досліджено фактор-простори одиничної кулі розмірності, не меншої
трьох, по деякій групі мебіусових перетворень. Для відображень таких
просторів отримано оцінку спотворення модуля сімей сфер. Як
застосування, отримано теореми про локальну і межову поведінку
класів Орліча--Соболєва двох фактор-просторів.

\medskip\medskip
We study the factor-spaces of the unit ball of dimension, not less
than three, by a certain group of M\"{o}bius transformations. For
mappings of such spaces, an estimate of the distortion of the
modulus of families of spheres is obtained. As applications, we
obtain theorems on the locally and boundary behavior of the
Orlicz--Sobolev classes between factor spaces.

\newpage
{\bf 1. Введение.} Настоящая статья посвящена исследованию
отображений с ограниченным и конечным искажением, а также классов
Соболева и Орлича--Соболева, активно изучаемых последнее время (см.,
напр., \cite{GRY}, \cite{IS}, \cite{KRSS} и \cite{MRSY}). %Как правило, мы
%рассматриваем отображения, принадлежащие классу Орлича-Соболева при
%условии Кальдерона на определяющую его функцию (см.~\cite{Cal}).
Ниже будет рассмотрен случай, когда отображения заданы в области
некоторого фактор-пространства по группе конформных автоморфизмов
единичного шара и действуют в область такого же вида. Отметим, что
изучению квазирегулярных отображений фактор-пространств посвящены
публикации~\cite{MS$_1$} и \cite{MS$_2$}. Также отметим несколько
недавних работ по римановым поверхностям,
см.~\cite{RV}--\cite{RV$_1$}, в которых рассматривались вопросы
локального и граничного поведения отображений. В силу теоремы
Пуанкаре об униформизации, фактор-пространства, изучаемые в данной
статье, можно рассматривать как наиболее близкий многомерный аналог
римановой поверхности. Основные результаты относятся к
ситуации~$n\geqslant 3,$ поскольку для $n=2$ они справедливы для
обычных классов Соболева $W_{\rm loc}^{1, 1}$ и получены
в~\cite{Sev}.

\medskip
Напомним определения. Здесь и далее $G$ обозначает некоторую
фиксированную группу мёбиусовых отображений единичного шара на себя,
кроме того, точки $x$ и $y\in{\Bbb B}^n$ будут называться {\it
$G$-эквивалентными} (или короче, {\it эквивалентными}), если
найдётся $A\in G$ такое, что $x=A(y).$ Множество, состоящее из
классов эквивалентности элементов по указанному принципу,
обозначается через ${\Bbb B}^n/G.$ Согласно~\cite[п.~3.4]{MS$_1$},
{\it гиперболическая мера} измеримого по Лебегу множества $A\subset
{\Bbb B}^n$ определяется соотношением
\begin{equation}\label{eq2B}
v(A)=\int\limits_A\frac{2^n\, dm(x)}{{(1-|x|^2)}^n}\,.
\end{equation}
Определим {\it гиперболическое расстояние} $h(x, y)$ между точками
$x, y\in {\Bbb B}^n$ посредством соотношения
\begin{equation}\label{eq3}
h(x, y)=\log\,\frac{1+t}{1-t}\,,\quad
t=\frac{|x-y|}{\sqrt{|x-y|^2+(1-|x|^2)(1-|y|^2)}}\,,
\end{equation}
см., напр., \cite[соотношение~(2.18), замечание~2.12 и
упражнение~2.52]{Vu$_1$}. Отметим, что $h(x, y)=h(g(x), g(y))$ для
всякого $g\in {\cal{G M}}({\Bbb B}^n),$
см.~\cite[соотношение~(2.20)]{Vu$_1$}.
В дальнейшем обозначим через $I$ тождественное отображение в~${\Bbb
R}^n.$ Согласно~\cite[п.~3.4]{MS$_1$}, {\it нормальным
фундаментальным многогранником} с центром в точке $x_0$ называется
множество
\begin{equation}\label{eq1}
P=\{x\in {\Bbb B}^n: d(x, x_0)<d(x, T(x_0))\}
\end{equation}
при каждом $T\in G\setminus\{I\},$ где $d(x, y)$ обозначает
гиперболическое расстояние между точками $x, y\in {\Bbb B}^n$ (см.,
напр., \cite[соотношение~(2.18)]{Vu$_1$}). Пусть $\pi:{\Bbb
B}^n\rightarrow {\Bbb B}^n/G$ -- естественная проекция области
${\Bbb B}^n$ на фактор пространство ${\Bbb B}^n/G,$ тогда {\it
гиперболическая мера} $\widetilde{v}(A)$ множества $A\subset {\Bbb
B}^n/G$ определяется соотношением $v(P\cap\pi^{\,-1}(A)),$ $P$ --
нормальный фундаментальный многогранник вида~(\ref{eq1}). Прямыми
вычислениями нетрудно убедиться, что гиперболическая мера не
изменяется при всяком отображении~$g\in{\cal{G M}}({\Bbb B}^n).$

\medskip
Для элементов $p_1, p_2\in {\Bbb B}^n/G,$ положим
\begin{equation}\label{eq2}
\widetilde{h}(p_1, p_2):=\inf\limits_{g_1, g_2\in G}h(g_1(z_1),
g_2(z_2))\,,
\end{equation}
где $p_i=G_{z_i}=\{\xi\in {\Bbb B}^n:\,\exists\, g\in G:
\xi=g(z_i)\},$ $i=1,2.$ Заметим, что $\widetilde{h}$ определяет
метрику на ${\Bbb B}^n/G,$ если $G$ -- разрывная группа мёбиусовых
отображений единичного шара ${\Bbb B}^n,$ $n\geqslant 2,$ на себя,
не имеющая неподвижных точек в ${\Bbb B}^n$ (см.~\cite{Sev$_1$}).
Множество $G_{x_i}$ будем называть {\it орбитой} точки $x_i,$ а
$p_1$ и $p_2$ назовём {\it орбитами} точек $z_1$ и $z_2,$
соответственно. {\it Длина} кривой $\gamma:[a, b]\rightarrow {\Bbb
B}^n/G$ в фактор-пространстве ${\Bbb B}^n/G$ на участке $[a, t],$
$a\leqslant t\leqslant b,$ определяется следующим образом:
\begin{equation}\label{eq5D}
l_{\gamma}(t):=\sup\limits_{\pi}\sum\limits_{k=0}^m\widetilde{h}(\gamma(t_k),
\gamma(t_{k+1}))\,,
\end{equation}
где~$\sup$ берётся по всем разбиениям $\pi=\{a=t_0\leqslant
t_1\leqslant t_2\leqslant\ldots\leqslant t_m=b\}.$

\medskip
Будем говорить, что группа $G$ мёбиусовых преобразований $g$
единичного шара на себя действует разывно в ${\Bbb B}^n,$ если
каждая точка $x\in {\Bbb B}^n$ имеет окрестность $U$ такую, что
$g(U)\cap U=\varnothing$ для всех $g\in G,$ кроме, может быть,
конечного числа элементов. Будем говорить, что группа $G$ не имеет
{\it неподвижных точек} в ${\Bbb B}^n,$ если для произвольного $a\in
{\Bbb B}^n $ равенство $g(a)=a$ возможно лишь при $g=I.$

\medskip
Пусть $D,$ $D_{\,*}$ -- области на фактор-пространствах ${\Bbb
B}^n/G$ и ${\Bbb B}^n/G_{\,*},$ соответственно. Предположим, что
${\Bbb B}^n/G$ и ${\Bbb B}^n/G_{\,*}$ -- метрические пространства с
метриками  $\widetilde{h}$ и $\widetilde{h_*},$ соответственно (то,
что $\widetilde{h}$ и $\widetilde{h_*}$ являются метриками будет
установлено в следующем разделе). Элементы длины и объёма
обозначаются на ${\Bbb B}^n/G$ и ${\Bbb B}^n/G_*,$ соответственно,
$ds_{\widetilde{h}},$ $d\widetilde{v}$ и $ds_{\widetilde{h_*}},$
$d\widetilde{v_*}.$ Отображение $f:D\rightarrow D_{\,*}$ будет
называться {\it дискретным}, если прообраз $f^{-1}\left(y\right)$
каждой точки $y\in D_{\,*}$ состоит только из изолированных точек.
Отображение $f:D\rightarrow D_{\,*}$ будет называться {\it
открытым}, если образ любого открытого множества $U\subset D$
является открытым множеством в $D_{\,*}.$ Будем говорить, что~$f\in
W_{\rm loc}^{1,1}(D),$ если для каждой точки $p_0\in D$ найдутся
открытые окрестности $U$ и $W,$ содержащие точки $p_0$ и $f(p_0),$
соответственно, относительно которых естественные проекции
$\pi:(\pi^{-1}(U))\rightarrow U$ и $\pi_*:(\pi_*^{-1}(V))\rightarrow
V$ являются взаимно однозначными отображениями, при этом,
$F=\pi_*^{\,-1}\circ f\circ\pi\in W_{\rm loc}^{1,1}(\pi^{\,-1}(U)).$
Следующее утверждение установлено в~\cite[предложение~1.1]{Sev$_1$}.

\medskip
\begin{proposition}\label{pr1A}
{\sl Предположим, $G$ -- разрывная группа мёбиусовых отображений
единичного шара ${\Bbb B}^n,$ $n\geqslant 2,$ на себя, не имеющая
неподвижных точек в ${\Bbb B}^n.$ Тогда фактор-пространство ${\Bbb
B}^n/G$ является конформным многообразием, т.е, топологическим
многообразием, в котором любые две карты согласованы при помощи
конформных отображений. При этом, естественная проекция $\pi,$
отображающая ${\Bbb B}^n$ на ${\Bbb B}^n/G,$ является локальным
гомеоморфизмом. Более того, соответствующие пары вида $(U,
\pi^{\,-1}),$ где $U$ -- некоторая окрестность произвольной точки
$p\in {\Bbb B}^n/G,$ в которой отображение $\pi^{\,-1}$ определено и
непрерывно, могут быть рассмотрены как карты, соответствующие
указанному многообразию.}
\end{proposition}

\medskip В силу предложения~\ref{pr1A} имеет смысл говорить о
локальных координатах $(U, \varphi)$ точки $p_0\in U\subset D.$
Пишем $f\in W_{\rm loc}^{1,p}(D),$ $p\geqslant 1,$ если $f\in W_{\rm
loc}^{1,1}(D)$ и, кроме того, $\frac{\partial f_i}{\partial x_j}\in
L^p_{\rm loc}(D)$ в локальных координатах. Для отображения
$f:D\rightarrow{\Bbb R}^n,$ имеющего почти всюду в $D,$ частные
производные, полагаем
\begin{equation}\label{eq1E}\Vert
f^{\,\prime}(x)\Vert=\max\limits_{|h|=1}{|f^{\,\prime}(x)h|}\,,\quad
J(x, f)={\rm det}\, f^{\,\prime}(x)\,.\end{equation}
{\it Внешняя дилатация $K_O(x,f)$} отображения $f$ в точке $x$
определена соотношением
\begin{equation}\label{eq1B}
K_O(x,f)\quad =\quad\left\{
\begin{array}{rr}
\frac{\Vert
f^{\,\prime}(x)\Vert^n}{|J(x, f)|}, & J(x,f)\ne 0,\\
1,  &  f^{\,\prime}(x)=0, \\
\infty, & \text{в\,\,остальных\,\,случаях.}
\end{array}
\right.\,.
\end{equation}
Если речь идёт об отображении $f$ между областями $D$ и $D_*,$
принадлежащих фактор-пространствам ${\Bbb B}^n/G$ и ${\Bbb
B}^n/G_*,$ соответственно, то полагаем $K_O(p,f)=K_O(\varphi(p),
F),$ где $F=\psi\circ f\circ\varphi^{-\,1},$ $(U, \varphi)$ --
локальные координаты точки $p$ и $(V, \psi)$ -- локальные координаты
точки~$f(p).$ В качестве таких локальных координат могут
рассматриваться отображения $\pi$ и $\pi_*,$ соответственно. В силу
предложения~\ref{pr1A} данное определение корректно, т.е., не
зависит от выбора локальных координат, поскольку внутренняя
дилатация конформного отображения равна~1. Отображение
$f:D\rightarrow D_{\,*}$ будет называться {\it отображением с
конечным искажением,} если $f\in W_{\rm loc}^{1, 1}(D)$ и, кроме
того, найдётся почти всюду конечная функция $K(x),$ такая что в
локальных координатах $\Vert f^{\,\prime}(x)\Vert^n\leqslant
K(x)\cdot J(x, f)$ при почти всех $x\in G$ (где $G$ --
соответствующая область в ${\Bbb R}^n$).

\medskip
Пусть $D$ и $D_*$ -- области, принадлежащие фактор-пространствам
${\Bbb B}^n/G$ и ${\Bbb B}^n/G_*,$ соответственно, и пусть
$\varphi\colon [0,\infty)\rightarrow[0,\infty)$ -- неубывающая
функция. Будем говорить, что $f\in W^{1,\varphi}_{\rm loc}(D),$ если
для каждой точки $p\in D$ и каждой точки $f(p)$ найдутся окрестности
$U$ и $V$ этих точек, а также координатные отображения
$\pi:(\pi^{\,-1}(U))\rightarrow U$ и
$\pi_*:(\pi_*^{\,-1}(V))\rightarrow V,$ являющиеся взаимно
однозначными в $U$ и $V,$ соответственно, такие что
$F=\pi_*^{\,-1}\circ f\circ\pi\in W_{\rm loc}^{1,1}(\pi^{\,-1}(U)),$
при этом,
\begin{equation}\label{eq1AAA}
\int\limits_{\pi^{-1}(U)}\varphi\left(|\nabla
F(x)|\right)\,dm(x)<\infty\,,
\end{equation}
где, как обычно,
$$|\nabla
F(x)|=\sqrt{\sum\limits_{i=1}^n\sum\limits_{j=1}^n\left(\frac{\partial
F_i}{\partial x_j}\right)^2}\,.$$ Класс $W^{1,\varphi}_{\rm loc}$
называется классом {\it Орлича--Соболева}.

\medskip
Как обычно, кривая $\gamma$ на фактор-пространстве ${\Bbb B}^n/G$
определяется как непрерывное отображение $\gamma:I\rightarrow {\Bbb
B}^n/G,$ где $I$ -- конечный отрезок, интервал либо полуинтервал
числовой прямой. Пусть $\Gamma$ -- семейство кривых в ${\Bbb
B}^n/G.$ Борелевская функция $\rho:{\Bbb B}^n/G\rightarrow [0,
\infty]$ будет называться {\it допустимой} для $\Gamma,$ если
$\int\limits_{\gamma}\rho(p)\,ds_{\widetilde{h}}(p)\geqslant 1$ для
всякой (локально спрямляемой) кривой $\gamma\in \Gamma.$ Последнее
записывают в виде: $\rho\in {\rm adm}\,\Gamma.$ {\it Модулем}
семейства $\Gamma$ называется величина, определённая равенством
$$M(\Gamma):=\inf\limits_{\rho\in {\rm adm}\,\Gamma}\int\limits_{{\Bbb
B}^n/G}\rho^n(p)\,d\widetilde{v}(p)\,.$$
Всюду далее $\mathcal{H}^k$, $k=1,\ldots,n$ обозначает {\it
$k$-мер\-ную меру Хаусдорфа} на фактор-пространстве ${\Bbb B}^n/G$
относительно расстояния $\widetilde{h}.$ Точнее, если $A$~---
множество в ${\Bbb B}^n/G$, то
\begin{equation}\label{eq4}
\mathcal{H}_{\widetilde{h}}^k(A):=\sup_{\varepsilon>0}\
\mathcal{H}^k_{\widetilde{h},\,\varepsilon}(A)\,,
\end{equation}
где
\begin{equation}\label{eq5A}
\mathcal{H}^k_{\widetilde{h},\,\varepsilon}(A):= \Omega_k2^{\,-k}
\inf\sum^{\infty}_{i=1}\left(\widetilde{h}(A_i)\right)^k\,,
\end{equation}
$\Omega_k$ -- площадь единичного шара в ${\Bbb R}^k,$ а инфимум
в~(\ref{eq5A}) берётся по всем покрытиям $A$ множествами $A_i$ с
$\widetilde{h}(A_i)<\varepsilon.$ Отметим, что
$\mathcal{H}_{\widetilde{h}}^k$ является {\it внешней мерой в смысле
Каратеодори}, см.~\cite{Sa}. Если в этим определении заменить
$\widetilde{h}$ на гиперболическую метрику $h,$ то получим
хаусдорфову меру $\mathcal{H}^k_h,$ а если использовать евклидову
метрику $|\cdot|,$ то будем иметь обычную (евклидову) хаусдорфову
$k$-мерную меру~$\mathcal{H}^k.$  В этом случае, для выражения
из~(\ref{eq5A}) мы используем обозначения
$\mathcal{H}^k_{h,\,\varepsilon}(A)$ и
$\mathcal{H}^k_{\varepsilon}(A),$ соответственно. Отметим, что
$\mathcal{H}_{\widetilde{h}}^k$ является {\it внешней мерой в смысле
Каратеодори}, см.~\cite{Sa}.

\medskip
Пусть далее $\omega$ --- открытое множество в $\overline{{\Bbb
R}^k},$ $k=1,\ldots,n-1.$ Тогда {\it $k$-мер\-ной поверхностью} $S$
на фактор-пространстве ${\Bbb B}^n/G$ называется произвольное
непрерывное отображение $S\colon\omega\rightarrow{\Bbb B}^n/G.$ {\it
Функцией кратности} $N(S,y)$ поверхности $S$ называется число
прообразов $y \in {\Bbb B}^n/G.$ Другими словами, символ $N(S,y)$
обозначает кратность накрытия точки $y$ поверхностью $S.$ Хорошо
известно, что функция кратности является полунепрерывной снизу,
т.е., для каждой последовательности $y_m\in{\Bbb B}^n/G$,
$m=1,2,\ldots\,$, такой что $y_m\rightarrow y\in{\Bbb B}^n/G$ при
$m\rightarrow\infty,$ выполняется условие $N(S,y)\ \geqslant\
\liminf\limits_{m\rightarrow\infty}\:N(S,y_m),$ см., напр.,
\cite[с.~160]{RR}. Отсюда следует, что функция $N(S,y)$ является
измеримой по Борелю и, следовательно, измеримой относительно
произвольной хаусдорфовой меры $\mathcal{H}_{\widetilde{h}}^k,$ см.,
напр., \cite[теорему~II~(7.6)]{Sa}.

\medskip
Напомним, что {\it $k$-мер\-ной хаусдорфовой площадью}
борелевского множества $B$ в ${\Bbb B}^n/G$ (либо просто {\it
площадью $B$}\index{площадь $B$} при $k=n-1$), ассоциированной с
поверхностью $S\colon\omega\rightarrow {\Bbb B}^n/G,$ называем
величину
\begin{equation*} \label{Smolka_gl_12_eq8.2.4} {\mathcal {\widetilde{A}}}_S(B)\ =\
{\mathcal{\widetilde{A}}}^{k}_S(B)\ :=\ \int\limits_B N(S,y)\
d\mathcal{H}_{\widetilde{h}}^ky\,,
\end{equation*}
см., напр., \cite[разд.~3.2.1]{Fe}. Соответственно, для борелевской
функции $\rho\colon{\Bbb B}^n/G\rightarrow[0,\infty]$ её {\it
интеграл над поверхностью} $S$ определяем равенством
\begin{equation*}\label{Smolka_gl_12_eq8.2.5}
\int\limits_S \rho\ d{\mathcal{\widetilde{A}}}\ :=\
\int\limits_{{\Bbb B}^n/G}\rho(y)\:N(S,y)\
d\mathcal{H}_{\widetilde{h}}^ky\,.\end{equation*}

\medskip
Пусть $n\geqslant 2,$ и $\Gamma$ -- семейство $k$-мерных
поверхностей $S.$ Борелевскую функцию $\rho\colon{\Bbb
B}^n/G\rightarrow\overline{{\Bbb R}^+}$ будем называть {\it
допустимой} для семейства $\Gamma,$ сокр. $\rho\in{\rm
adm}\,\Gamma,$ если
\begin{equation}
\label{eq8.2.6}\int\limits_S\rho^k\,d{\mathcal{\widetilde{A}}}\geqslant
1\end{equation} для каждой поверхности $S\in\Gamma.$ Если речь идёт
о семействе поверхностей, то пока не оговорено противное, мы
полагаем $k=n-1.$
Модуль семейства $\Gamma$ поверхностей $S$ определяется соотношением
$$M(\Gamma)=\inf\limits_{\rho\in{\rm adm}\,\Gamma}
\int\limits_{{\Bbb B}^n/G}\rho^n(p)\,d\widetilde{v}(p)\,.$$
Заметим, что модуль $M(\cdot),$ определённый таким образом,
представляет собой внешнюю меру в пространстве $k$-мерных
поверхностей (см.~\cite{Fu}). Будем говорить, что некоторое свойство
$P$ выполнено для {\it почти всех поверхностей} области $D,$ если
оно имеет место для всех поверхностей, лежащих в $D,$ кроме, быть
может, некоторого их подсемейства, модуль которого равен нулю. Мы
будем говорить, что измеримая относительно гиперболической меры
$\widetilde{v}$ на ${\Bbb B}^n/G$ функция $\rho\colon{\Bbb
B}^n/G\rightarrow\overline{{\Bbb R}^+}$ {\it обобщённо допустима}
для семейства $\Gamma$ поверхностей $S$ в ${\Bbb B}^n/G,$ сокр.
$\rho\in{\rm ext}\,{\rm adm}\,\Gamma,$ если
соотношение~(\ref{eq8.2.6}) выполнено для почти всех поверхностей
$S$ семейства $\Gamma.$ {\it Обобщённый модуль} $\overline
M(\Gamma)$ семейства $\Gamma$ определяется равенством
$$\overline M(\Gamma)= \inf\int\limits_{{\Bbb B}^n/G}\rho^n(p)\,d\widetilde{v}(p)\,,$$
где точная нижняя грань берётся по всем функциям $\rho\in{\rm
ext}\,{\rm adm}\,\Gamma.$

\medskip
Изучение следующего класса отображений связано с кольцевым
определением квазиконформности по Герингу, а также с приложениями
модульных неравенств к классам Соболева и Орлича--Соболева (см.,
напр.~\cite{Ge$_3$} и \cite[глава~9]{MRSY}). Пусть $n\geqslant 2,$
$D$ и $D^{\,\prime}$~--- заданные области в ${\Bbb B}^n/G$ и ${\Bbb
B}^n/G_*,$ соответственно, $p_0\in\overline{D}$ и $Q\colon
D\rightarrow(0,\infty)$~--- измеримая функция относительно
гиперболической меры на~${\Bbb B}^n/G.$ Будем говорить, что $f\colon
D\rightarrow D^{\,\prime}$~--- {\it нижнее $Q$-отображение в точке}
$p_0,$ если существует окрестность $U$ точки $p_0$ такая, что
соотношение
\begin{equation}\label{eq1A}
M(f(\Sigma_{\varepsilon}))\geqslant \inf\limits_{\rho\in{\rm
ext\,adm}\,\Sigma_{\varepsilon}}\int\limits_{D\cap
\widetilde{A}(p_0, \varepsilon,
\varepsilon_0)}\frac{\rho^n(p)}{Q(p)}\,d\widetilde{v}(p)
\end{equation}
имеет место для каждого кольца
$$\widetilde{A}(p_0, \varepsilon, \varepsilon_0)=
\{p\in {\Bbb B}^n/G: \varepsilon<\widetilde{h}(p,
p_0)<\varepsilon_0\}$$ $\varepsilon_0\in(0,d_0),$
$d_0=\sup\limits_{p\in U}\widetilde{h}(p, p_0),$
где $\Sigma_{\varepsilon}$ обозначает семейство всех пересечений
геодезических сфер $S(p_0, r)$ с областью $D,$ $r\in (\varepsilon,
\varepsilon_0).$

\medskip
Пусть $D$ --- подмножество ${\Bbb B}^n/G.$ Для отображения $f\colon
D\,\rightarrow\,{\Bbb B}^n/G_*,$ множества $E\subset D$ и
$y\,\in\,{\Bbb B}^n/G_*$ определим {\it функцию кратности $N(y, f,
E)$} как число прообразов точки $y$ во множестве $E,$ т.е.,
\begin{equation}\label{eq1.7A}
N(y, f, E)\,=\,{\rm card}\,\left\{p\in E\,: f(p)=y\right\}\,, \qquad
N(f, E)\,=\,\sup\limits_{y\in {\Bbb B}^n/G_*}\,N(p, f, E)\,.
\end{equation}

\medskip
Имеет место следующее утверждение (см. также~\cite[теорема~5]{KRSS}
и~\cite[теорема~4.2]{IS}).

\medskip
 \begin{theorem}{}\label{thOS4.1}
{\sl Пусть $n\geqslant 3,$ $G$ и $G_*$ -- некоторые группы
мёбиусовых автоморфизмов единичного шара, действующие разрывно в
${\Bbb B}^n$ и не имеющие в ${\Bbb B}^n$ неподвижных точек.
Предположим, $D$ и $D_{\,*}$~--- области, принадлежащие ${\Bbb
B}^n/G$ и ${\Bbb B}^n/G_*$ соответственно, при этом, $\overline{D}$
и $\overline{D_{\,*}}$ являются компактами. Пусть также
$\varphi\colon(0,\infty)\rightarrow (0,\infty)$~--- неубывающая
функция, удовлетворяющая условию Кальдерона
\begin{equation}\label{eqOS3.0a}
\int\limits_{1}^{\infty}\left[\frac{t}{\varphi(t)}\right]^
{\frac{1}{n-2}}dt<\infty\,.
\end{equation}
Тогда каждое открытое дискретное отображение $f\colon D\rightarrow
{\Bbb B}^n/G_*$ класса $W^{1,\varphi}_{\rm loc},$ имеющее конечное
искажение и такое, что $N(f, D)<\infty,$ является нижним
$Q$-отображением в каждой точке $p_0\in\overline{D}$ при
$Q(p):=c\cdot N(f, D)\cdot K_O(p, f),$ где $c>0$ -- некоторая
постоянная, внешняя дилатация $K_O(p, f)$ отображения $f$ в точке
$p$ определена соотношением~(\ref{eq1B}), а кратность $N(f, D)$
определена в~(\ref{eq1.7A}).}
 \end{theorem}

\medskip
\begin{remark}
Условие~(\ref{eqOS3.0a}) принадлежит Кальдерону, см.~\cite{Cal}.
\end{remark}

\medskip
{\bf 2. Предварительные сведения.} Для точки $y_0\in {\Bbb B}^n$ и
числа $r\geqslant 0$ определим {\it гиперболический круг} $B_h(y_0,
r)$ и {\it гиперболическую окружность} $S_h(y_0, r)$ посредством
равенств
\begin{equation}\label{eq7}
B_h(y_0, r):=\{y\in {\Bbb B}^n: h(y_0, y)<r\}\,, S_h(y_0, r):=\{y\in
{\Bbb B}^n: h(y_0, y)=r\}\,. \end{equation}
В дальнейшем
\begin{equation}\label{eq2A} \widetilde{B}(p_0, r):=\{p\in {\Bbb B}^n/G: \widetilde{h}(p_0,
p)<r\}\,,\quad \widetilde{S}(p_0, r):=\{p\in {\Bbb B}^n/G:
\widetilde{h}(p_0, p)=r\}\end{equation}
-- шар и сфера с центром в точке $p_0$ на ${\Bbb B}^n/G.$ Всюду
далее $B(x_0, r)$ и $S(x_0, r)$ обозначают шар и сферу с центром в
точке $x_0\in {\Bbb R}^n.$

\medskip
Пусть $p_0\in {\Bbb B}^n/G$ и $z_0\in {\Bbb B}^n$ таково, что
$\pi(z_0)=p_0,$ где $\pi$ -- естественная проекция ${\Bbb B}^n$ на
${\Bbb B}^n/G.$ По предложению~1.1, и леммам~2.2 и 2.3
в~\cite{Sev$_1$} можно выбрать $\varepsilon_0>0$ таким, что
естественная проекция $\pi$ гомеоморфно
отображает~$B_h(z_0,\varepsilon_0)\subset {\Bbb B}^n$
на~$\widetilde{B}(p_0, \varepsilon_0)\subset {\Bbb B}^n/G$ так, что
$\overline{\widetilde{B}(p_0,\varepsilon_0)}$ -- компакт в ${\Bbb
B}^n/G.$ Не ограничивая общности, можно также считать, что $z_0=0$
(см.~\cite[пункт~1.34, лемма~1.37]{Vu$_1$}). В этом случае,
окрестность $U$ точки $p_0,$ лежащую вместе со своим замыканием в
$\widetilde{B}(p_0, \varepsilon_0),$ будем называть~{\it нормальной
окрестностью точки} $p_0.$ Имеет место следующий аналог теоремы
Фубини для фактор-пространств.

\medskip
\begin{lemma}\label{lem2}
{\sl Пусть $n\geqslant 2,$ $D$~--- область в ${\Bbb B}^n/G,$ где $G$
-- некоторая группа мёбиусовых автоморфизмов единичного шара,
действующих разрывно в ${\Bbb B}^n$ и не имеющих в ${\Bbb B}^n$
неподвижных точек.

Пусть $U$ -- некоторая нормальная окрестность точки $p_0\in {\Bbb
B}^n/G,$ $Q:U\rightarrow [0, \infty]$ -- измеримая относительно меры
$\widetilde{v}$ функция, $d_0:=\widetilde{h}(p_0,
\partial U):=\inf\limits_{p\in
\partial U}\widetilde{h}(p_0, p).$ Тогда существуют постоянные $C_1$ и $C_2,$ зависящие только
от $U$ такие, что при каждом $0<r_0\leqslant d_0$
\begin{equation}\label{eq6}
C_2\cdot \int\limits_0^{r_0}\int\limits_{\widetilde{S}(p_0,
r)}Q(p)\, d\mathcal{H}_{\widetilde{h}}^{n-1}\,dr\leqslant
\int\limits_{\widetilde{B}(p_0,
r_0)}Q(p)\,d\widetilde{v}(p)\leqslant C_1\cdot
\int\limits_0^{r_0}\int\limits_{\widetilde{S}(p_0, r)}Q(p)\,
d\mathcal{H}_{\widetilde{h}}^{n-1}\,dr\,,
\end{equation}
где $d\widetilde{v}(p)$ -- элемент объёма на ${\Bbb B}^n/G,$ а шар
$\widetilde{B}(p_0, r_0)$ и сфера $\widetilde{S}(p_0, r)$ определены
в~(\ref{eq2A}). }
\end{lemma}

\medskip
Утверждение леммы~\ref{lem2} включает в себя измеримость по $r$
внутренней функции $\psi(r):=\int\limits_{\widetilde{S}(p_0,
r)}Q(p)\, d\mathcal{H}_{\widetilde{h}}^{n-1}$ в правой части
интеграла в~(\ref{eq6}).

\begin{proof}
Согласно определению нормальной окрестности~$U$, шару
$\widetilde{B}(p_0, r_0)\subset {\Bbb B}^n/G$ соответствует шар
$B_h(0, r_0)\subset {\Bbb R}^n$ в гиперболической метрике $h.$
Учитывая соотношение~(\ref{eq7}), $B_h(0, r_0)=B\left(0,
\frac{e^{r_0}-1}{e^{r_0}+1}\right).$ По определению
\begin{equation}\label{eq8}
\int\limits_{\widetilde{B}(p_0,
r_0)}Q(p)\,\,d\widetilde{v}(p)=4\int\limits_{B\left(0,
\frac{e^{r_0}-1}{e^{r_0}+1}\right)}\frac{Q(\pi(x))}{{(1-|x|^2)}^n}\,\,dm(x):=I\,.
\end{equation}
Здесь и далее $\mathcal{H}^{n-1}$ обозначает $(n-1)$-мерную
хаусдорфову меру относительно обычной (евклидовой) метрики, а
$\mathcal{H}_h^{n-1}$ -- относительно гиперболической метрики $h$ в
единичном круге. Воспользуемся классической теоремой Фубини в
$n$-мерном евклидовом пространстве (см., напр.,
\cite[теорема~2.6.2]{Fe} либо \cite[теорема~8.1.III]{Sa}). Используя
полярные координаты и применяя эту теорему, будем иметь, что
\begin{equation}\label{eq9}I=4\int\limits_0^{\frac{e^{r_0}-1}{e^{r_0}+1}}\int\limits_{S(0,
r)} \frac{Q(\pi(x))}{{(1-|x|^2)}^n}\,d\mathcal{H}^{n-1}\,dr\,=
4\int\limits_0^{\frac{e^{r_0}-1}{e^{r_0}+1}}\frac{1}{(1-r^2)^n}\int\limits_{S(0,
r)}Q(\pi(x))\,d\mathcal{H}^{n-1}\,dr\,.
\end{equation}
Отметим, что утверждение теоремы Фубини включает в себя
существование внутренних интегралов в~(\ref{eq9}) относительно
элемента $d\mathcal{H}^{n-1},$ как минимум, при почти всех $r\in
\left[0, \frac{e^{r_0}-1}{e^{r_0}+1}\right],$ кроме того, согласно
этой теоремы функция $\alpha(r):=\int\limits_{S(0,
r)}Q(\pi(x))d\mathcal{H}^{n-1}$ измерима по $r$ (см. там же).
Учитывая, что $S_h\left(0, \log\frac{1+r}{1-r}\right)=S(0, r),$
последние соотношения можно переписать в виде
\begin{equation}\label{eq10A}I=4\int\limits_0^{\frac{e^{r_0}-1}{e^{r_0}+1}}\frac{1}{(1-r^2)^n}\int\limits_{S_h\left(0,
\log\frac{1+r}{1-r}\right)}Q(\pi(x))\,d\mathcal{H}^{n-1}\,dr\,.
\end{equation}
Делая в~(\ref{eq10A}) замену $\log\frac{1+r}{1-r}=t$ и учитывая
равенство $\frac{2}{1-r^2}dr=dt$ получаем, что
\begin{equation}\label{eq3A}
I=4\int\limits_0^{r_0}\frac{1}{2(1-r^2)^{n-1}}\int\limits_{S_h(0,
t)}Q(\pi(x))\,d\mathcal{H}^{n-1}\,dt\,,
\end{equation}
где $r=r(t)=\frac{e^t-1}{e^t+1}.$ Заметим, что
$\frac12\leqslant\frac{1}{2(1-r^2)^{n-1}}\leqslant C(r_0)$ при всех
$0<r\leqslant \frac{e^{r_0}-1}{e^{r_0}+1},$ поэтому из~(\ref{eq3A})
мы получаем, что
\begin{equation}\label{eq4A}
2\int\limits_0^{r_0}\int\limits_{S_h(0,
t)}Q(\pi(x))\,d\mathcal{H}^{n-1}\,dt\leqslant I\leqslant
4C(r_0)\cdot\int\limits_0^{r_0}\int\limits_{S_h(0,
t)}Q(\pi(x))\,d\mathcal{H}^{n-1}\,dt\,.
\end{equation}
Заметим, что при достаточно малом $r_0>0$ и некоторой постоянной
$c_1>0,$ зависящей только от $r_0,$
\begin{equation}\label{eq3B}
c_1\cdot h(z_1, z_2)\leqslant |z_1-z_2|\leqslant  h(z_1,
z_2)\quad\forall\,\, z_1, z_2\in B_h(0, r_0)\end{equation}
(см.~\cite[лемма~2.4]{Sev$_1$}). Заметим, что согласно~(\ref{eq3B})
измеримое множество $E\subset {\Bbb B}^n$ относительно
меры~$\mathcal{H}^{n-1}$ является также измеримым относительно
$\mathcal{H}_h^{n-1}.$ В самом деле, из определения
меры~$\mathcal{H}^{n-1}$ вытекает справедливость представления
$E=B\cup B_0,$ где $B$ является $\sigma$-компактом в $({\Bbb B}^n,
|\cdot|),$ $|\cdot|$ -- евклидова метрика, а
$\mathcal{H}^{n-1}(B_0)=0.$ Заметим также, что отображение $f(x)=x$
переводит ${\Bbb B}^n$ на себя так, что $f(B)$ снова является
$\sigma$-компактом в $({\Bbb B}^n, h),$ $h$ -- гиперболическая
метрика (это вытекает из непрерывности отображения $f(x)=x$ из
$({\Bbb B}^n, |\cdot|)$ в $({\Bbb B}^n, h),$ которая, в свою
очередь, вытекает из~(\ref{eq3B})). Наконец, $f(B_0)$ является
множеством меры ноль в~$({\Bbb B}^n, h)$ относительно $\mathcal{H
}_h^{n-1}$ ввиду соотношений~(\ref{eq3B}) и определения меры
$\mathcal{H}_h^{n-1}$ в~(\ref{eq4}). Значит, $f(E)=f(B)\cup f(B_0)$
является измеримым в~$({\Bbb B}^n, h)$ относительно $\mathcal{H
}_h^{n-1}$ как объединение двух множеств, одно из которых является
$\sigma$-компактом в $({\Bbb B}^n, h),$ а второе имеет меру ноль
относительно~$\mathcal{H}_h^{n-1}.$

\medskip
Из сказанного следует измеримость функции $Q(\pi(x))$ на $S_h(0, t)$
относительно меры $\mathcal{H}_h^{n-1}$ при почти всех $t.$ Кроме
того, по определению Хаусдорфовых мер в~(\ref{eq4}), для любого
измеримого множества $E\subset B_h(0, r_0)$ имеют место неравенства
\begin{equation}\label{eq1C}
c^{\,*}_1\cdot \mathcal{H}_h^{n-1}(E)\leqslant
\mathcal{H}^{n-1}(E)\leqslant
\mathcal{H}_h^{n-1}(E)\,,\end{equation}
где $c^{\,*}_1=c^{n-1}_1.$ Из~(\ref{eq1C}) по теореме
Радона-Никодима (см.~\cite[теорема~I.14.11]{Sa}, см.
также~\cite[теорема~2.3.VI]{KF}) вытекает, что
\begin{equation}\label{eq2C}
\mathcal{H}_h^{n-1}(S_h(0, t))=\int\limits_{S_h(0,
t)}\varphi(x)\,d\mathcal{H}^{n-1}\,,
\end{equation}
где $\varphi: S_h(0, t)\rightarrow {\Bbb R}$ -- некоторая
неотрицательная измеримая функция. Из~(\ref{eq2C}) по теореме о
преобразовании меры (см.~\cite[теорема~I.15.1]{Sa}) вытекает, что
\begin{equation}\label{eq2D}
\int\limits_{S_h(0,
t)}Q(\pi(x))\,d\mathcal{H}_h^{n-1}=\int\limits_{S_h(0,
t)}Q(x)\cdot\varphi(x)\,d\mathcal{H}^{n-1}\,.
\end{equation}
Отсюда по теореме Фубини (см., напр., \cite[теорема~2.6.2]{Fe} либо
\cite[теорема~8.1.III]{Sa}) функция $\psi(t)=\int\limits_{S_h(0,
t)}Q(\pi(x))d\mathcal{H}_h^{n-1}$ измерима по $t\in [0, r_0].$
Тогда из~(\ref{eq4A}) и (\ref{eq1C}) вытекает, что
\begin{equation}\label{eq5}
2c^{\,*}_1\cdot\int\limits_0^{r_0}\int\limits_{S_h(0,
t)}Q(\pi(x))\,d\mathcal{H}_h^{n-1}\,dt\leqslant I\leqslant
4C(r_0)\cdot\int\limits_0^{r_0}\int\limits_{S_h(0,
t)}Q(\pi(x))\,d\mathcal{H}_h^{n-1}\,dt\,.
\end{equation}
Поскольку отображение $\pi$ изометрично отображает $B_h(0, r_0)$ на
$\widetilde{B}(p_0, r_0),$ то для любого измеримого $E\subset B_h(0,
r_0)$ мы имеем:
\begin{equation}\label{eq11}
\mathcal{H}_h^{n-1}(E)=\mathcal{H}_{\widetilde{h}}^{n-1}(\pi(E))\,.
\end{equation}
Объединяя~(\ref{eq5}) и~(\ref{eq11}), мы получаем, что
\begin{equation}\label{eq6A}
2c^{\,*}_1\cdot\int\limits_0^{r_0}\int\limits_{\widetilde{S}(0,
t)}Q(p)\,d\mathcal{H}_{\widetilde{h}}^{n-1}\,dt\leqslant I\leqslant
4C(r_0)\cdot\int\limits_0^{r_0}\int\limits_{\widetilde{S}(0,
t)}Q(p)\,d\mathcal{H}_{\widetilde{h}}^{n-1}\,dt\,.
\end{equation}
Из неравенств~(\ref{eq6A}) вытекает~(\ref{eq6}), если
положить~$C_1:=2c^{\,*}_1$ и $C_2:=4C(r_0).$~$\Box$
\end{proof}

\medskip
Установим следующее утверждение, связывающее понятие <<почти всех>>
относительно модуля семейств поверхностей и лебеговском смысле (см.
также~\cite[лемма~4.1]{IS} для случая римановых многообразий).

\medskip
 \begin{lemma}\label{lem8.2.11}
{\sl\,Пусть $n\geqslant 2,$ $D$~--- область в ${\Bbb B}^n/G,$ где
$G$ -- некоторая группа мёбиусовых автоморфизмов единичного шара,
действующих разрывно в ${\Bbb B}^n$ и не имеющих в ${\Bbb B}^n$
неподвижных точек.

Предположим, $p_0\in\overline{D}$ и $U$ -- нормальная окрестность
точки $p_0.$ Если некоторое свойство $P$ имеет место для почти всех
сфер $D(p_0, r):=\widetilde{S}(p_0, r)\cap D,$ лежащих в $U,$ где
<<почти всех>> понимается в смысле модуля семейств поверхностей и,
кроме того, множество $$E=\{r\in {\Bbb R}: P\,\,\,\,\text{имеет\,\,
место для}\,\,\,\, \widetilde{S}(p_0, r)\cap D\}$$ измеримо по
Лебегу, то $P$ также имеет место для почти всех сфер $D(p_0, r),$
лежащих в $U$ относительно линейной меры Лебега по параметру $r\in
{\Bbb R }.$ Обратно, пусть $P$ имеет место для почти всех сфер
$D(p_0, r):=\widetilde{S}(p_0, r)\cap D$ относительно линейной меры
Лебега по $r\in {\Bbb R},$ тогда $P$ также имеет место для почти
всех сфер $D(p_0, r):=\widetilde{S}(p_0, r)\cap D$ в смысле модуля.}
 \end{lemma}

\medskip
 \begin{proof} {\it Необходимость.} Пусть некоторое свойство
$P$ имеет место для почти всех сфер $D(p_0, r):=\widetilde{S}(p_0,
r)\cap D,$ где <<почти всех>> понимается в смысле модуля семейств
кривых. Покажем, что $P$ также имеет место для почти всех сфер
$D(p_0, r)$ по отношению к параметру $r\in {\Bbb R}.$

Предположим, что заключение леммы не является верным. Тогда найдётся
семейство $\Gamma$ сфер $D(p_0, r)=\{p\in D: \widetilde{h}(p,
p_0)=r\},$ лежащих в некоторой нормальной окрестности $U$ точки
$p_0,$ для которого свойство $P$ выполнено в смысле почти всех
поверхностей относительно модуля, однако, нарушается для некоторого
множества индексов $r\in {\Bbb R}$ положительной меры. Обозначим
$\varphi:=(\pi|_U)^{\,-1}.$ Поскольку по определению окрестности $U$
найдётся $R>0$ такое, что $U\subset \widetilde{B}(p_0, R)$ и
$\overline{\widetilde{B}(p_0, R)}$ -- компакт в ${\Bbb B}^n/G,$ то
$\widetilde{h}(U)\leqslant
\widetilde{h}(\overline{\widetilde{B}(p_0, R)})<\infty.$

В дальнейшем мы условимся называть множество $B\subset {\Bbb B}^n/G$
борелевым (измеримым), если $\varphi(B)$ борелево (измеримо) в
${\Bbb B}^n.$ Ввиду регулярности меры Лебега $m_{1}$ найдётся
борелевское множество $B\subset {\Bbb R},$ такое что $m_{1}(B)>0$ и
свойство $P$ нарушается для почти всех $r\in B.$ Пусть $\rho\colon
{\Bbb B}^n/G\rightarrow[0,\infty]$~--- допустимая функция для
семейства $\Gamma.$ Заметим, что множество $E=\{p\in
\widetilde{B}(p_0, R)\,:\,\exists\,\, r\in B\cap [0, R]:\,
\widetilde{h}(p, p_0)=r\}$ является борелевым на ${\Bbb B}^n/G$,
поскольку $\varphi(E)=\{y\in B(0,
\frac{e^R-1}{e^R+1})\,:\,\exists\,\, r\in \varphi(B)\cap [0,
\frac{e^R-1}{e^R+1}] :\,|y| =r\},$ и $\varphi(E)$ измеримо по Борелю
ввиду \cite[теорема~2.6.2]{Fe}.

Учитывая, что $B$~--- борелево, мы можем считать, что $\rho\equiv 0$
вне множества $E,$ так как подобное предположение не нарушает
борелевость $\rho.$ По неравенству Гёльдера
$$\int\limits_{E}\rho^{n-1}(p)\ d\widetilde{v}(p)\ \leqslant\
\left(\int\limits_{E}\rho^n(p)\
d\widetilde{v}(p)\right)^{(n-1)/n}\left(\int\limits_{E}\
d\widetilde{v}(p)\right)^{1/n}$$
и следовательно, по лемме~\ref{lem2},
$$\int\limits_{{\Bbb B}^n/G}\rho^n(p)\ d\widetilde{v}(p)\ \geqslant\ \frac{\left(\int\limits_E\rho^{n-1}(p)\
d\widetilde{v}(p)\right)^{n/(n-1)}}{\left(\int\limits_E\
d\widetilde{v}(p)\right)^{1/(n-1)}}\ \geqslant\
\frac{(m_1(B))^{n/(n-1)}}{c}$$ для некоторого $c>0,$ т.е.,
$M(\Gamma)>0,$ что противоречит предположению леммы. Первая часть
леммы~\ref{lem8.2.11} доказана.

\medskip
{\it Достаточность.} Пусть $P$ имеет место для почти всех $r$
относительно меры Лебега и всех соответствующих этим $r$ сфер
$D(p_0, r),$ $r\in {\Bbb R}.$ Покажем, что $P$ также выполняется для
почти всех сфер $D(p_0, r):=S(p_0, r)\cap D$ в смысле модуля
семейств поверхностей.

Обозначим через $\Gamma_0$ семейство всех пересечений
$D_r:=D(p_0,r)$ сфер $S(p_0, r)$ с областью $D,$ для которых $P$ не
имеет места. Пусть $R$ обозначает множество всех $r\in {\Bbb R}$
таких, что $D_r\in\Gamma_0.$ Если $m_1(R)=0,$ то по лемме~\ref{lem2}
$\widetilde{v}(E)=0,$ где $E=\{p\in D\,|\, \widetilde{h}(p,
p_0)=r\in R\}.$ Рассмотрим функцию $\rho_1\colon{\Bbb
B}^n/G\rightarrow [0, \infty],$ равную $\infty$ при $p\in E,$ и
имеющую значение 0 во всех остальных точках. Отметим, что найдётся
борелева функция $\rho_2\colon{\Bbb B}^n/G\rightarrow [0, \infty],$
совпадающая почти всюду с $\rho_1$ (см.~\cite[раздел~2.3.5]{Fe}).
Таким образом, $M(\Gamma_0)\leqslant \int\limits_E \rho_2^n(p)
\,d\widetilde{v}(p)=\int\limits_E \rho_1^n(p)
\,d\widetilde{v}(p)=0,$ следовательно, $M(\Gamma_0)=0.$
Лемма~\ref{lem8.2.11} полностью доказана.~$\Box$
\end{proof}

\medskip
Следующее утверждение установлено в \cite[лемма~9.2]{MRSY}.

\medskip
 \begin{proposition}\label{Salnizh1}
{\sl Пусть $(X, \mu)$~--- измеримое пространство с конечной мерой
$\mu,$ $q\in(1,\infty),$ и пусть $\varphi\colon
X\rightarrow(0,\infty)$~--- измеримая функция. Полагаем
\begin{equation}\label{Sal_eq2.1.7}
I(\varphi, q)=\inf\limits_{\alpha}
\int\limits_{X}\varphi\,\alpha^q\,d\mu\,,
\end{equation}
где инфимум берется по всем измеримым функциям $\alpha\colon
X\rightarrow[0,\infty]$ таким\/{\em,} что
%
%\begin{equation*}%\label{Sal_eq2.1.8}
$\int\limits_{X}\alpha\,d\mu=1.$ %\end{equation*}
Тогда
%
%\begin{equation*}%\label{Sal_eq2.1.9}
$I(\varphi, q)=\left[\int\limits_{X}\varphi^{-\lambda}\,d\mu\right]
^{-\frac{1}{\lambda}},$ %\end{equation*}
где
%\begin{equation*}%\label{Sal_eq2.1.10}
$\lambda=\frac{q^{\,\prime}}{q},$ $\frac{1}{q}+\frac{1}{q^{\,\prime}}=1,$ %\end{equation*}
т.е. $\lambda=1/(q-1)\in(0,\infty).$ Точная нижняя грань
в~(\ref{Sal_eq2.1.7}) достигается на функции
$ \rho=\left(\int\limits_X
\varphi^\frac{1}{1-\alpha}\;d\mu\right)^{-1}\varphi^\frac{1}{1-\alpha}.
$}
\end{proposition}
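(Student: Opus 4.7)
The plan is to derive both the value of $I(\varphi,q)$ and the explicit extremal density from a single application of H\"older's inequality to the normalization constraint $\int_X\alpha\,d\mu=1$. Concretely, I would split $\alpha=(\alpha\,\varphi^{1/q})\cdot\varphi^{-1/q}$ and apply H\"older with conjugate exponents $q$ and $q^{\,\prime}=q/(q-1)$, obtaining
\begin{equation*}
1\;=\;\int\limits_X\alpha\,d\mu\;\leqslant\;\left(\int\limits_X\varphi\,\alpha^q\,d\mu\right)^{1/q}\left(\int\limits_X\varphi^{-q^{\,\prime}/q}\,d\mu\right)^{1/q^{\,\prime}}\,.
\end{equation*}
Since $q^{\,\prime}/q=1/(q-1)=\lambda$ and $q/q^{\,\prime}=q-1=1/\lambda$, rearranging gives the sharp lower bound $\int_X\varphi\,\alpha^q\,d\mu\geqslant\bigl(\int_X\varphi^{-\lambda}\,d\mu\bigr)^{-1/\lambda}$ for every admissible $\alpha$, which already yields the claimed value as a lower bound for $I(\varphi,q)$.

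The second step is to exhibit the extremal. The equality case of H\"older requires $\alpha^q\varphi$ to be proportional to $\varphi^{-q^{\,\prime}/q}=\varphi^{-\lambda}$, i.e.\ $\alpha$ proportional to $\varphi^{-\lambda}=\varphi^{1/(1-q)}$. Normalizing to satisfy $\int_X\rho\,d\mu=1$ produces the candidate
\begin{equation*}
\rho\;=\;\left(\int\limits_X\varphi^{1/(1-q)}\,d\mu\right)^{-1}\varphi^{1/(1-q)}\,,
\end{equation*}
which matches the formula in the statement. A direct substitution $\int_X\varphi\,\rho^q\,d\mu$ and a small amount of exponent bookkeeping (using $1+q/(1-q)=1/(1-q)=-\lambda$ and $q-1=1/\lambda$) gives exactly $\bigl(\int_X\varphi^{-\lambda}\,d\mu\bigr)^{-1/\lambda}$, so the lower bound is attained and the infimum is realized by $\rho$.

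The argument is essentially formal once one commits to H\"older, so there is no serious obstacle; the main thing to be careful about is the tracking of the three exponents $\lambda$, $q$, $q^{\,\prime}$ and the verification that $\rho$ is indeed admissible (i.e.\ that $\int_X\varphi^{-\lambda}\,d\mu$ is finite, which is implicit in the nontrivial case $I(\varphi,q)>0$). The degenerate case $\int_X\varphi^{-\lambda}\,d\mu=\infty$ is handled separately by concentrating $\alpha$ on a sublevel set of $\varphi$, which shows $I(\varphi,q)=0$ in accordance with the convention $\infty^{-1/\lambda}=0$. Apart from this routine check, the proof reduces entirely to the sharpness of H\"older's inequality.
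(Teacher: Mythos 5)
Your argument is correct: the splitting $\alpha=(\alpha\varphi^{1/q})\varphi^{-1/q}$, H\"older with exponents $q$ and $q^{\,\prime}$, the equality-case identification $\alpha\propto\varphi^{1/(1-q)}$, and the separate treatment of the degenerate case $\int_X\varphi^{-\lambda}\,d\mu=\infty$ all check out (and you rightly read the exponent $1/(1-\alpha)$ in the stated extremal as a typo for $1/(1-q)$). Note that the paper itself gives no proof of this proposition --- it is quoted from Lemma~9.2 of the cited monograph of Martio, Ryazanov, Srebro and Yakubov --- and the sharp-H\"older argument you give is exactly the standard proof found there, so your approach coincides with the source.
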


\medskip
Следующее утверждение представляет собой критерий выполнения
бесконечной серии неравенств в~(\ref{eq1A}) (см. также
\cite[теорема~9.2]{MRSY} и \cite[лемма~4.2]{IS}).

\medskip
 \begin{lemma}\label{lem4A}
{\sl\, Пусть $D$ и $D_{\,*}$~--- заданные области в ${\Bbb B}^n/G$ и
${\Bbb B}^n/G_*,$ соответственно, $p_0\in\overline{D}$ и $Q\colon
D\rightarrow(0,\infty)$~--- заданная измеримая функция. Тогда, если
отображение $f\colon D\rightarrow D_{\,*}$ является нижним
$Q$-отображением в точке $p_0,$ то найдутся $0<d_0<\sup\limits_{p\in
D}\widetilde{h}(p, p_0)$ и постоянная $M>0,$ зависящая только от
окрестности $U$ такие, что
\begin{equation}\label{eq15}
M(f(\Sigma_{\varepsilon}))\geqslant M\cdot
\int\limits_{\varepsilon}^{\varepsilon_0}
\frac{dr}{\Vert\,Q\Vert_{n-1}(r)}\quad\forall\
\varepsilon\in(0,\varepsilon_0)\,,\ \varepsilon_0\in(0,d_0)\,,
\end{equation}
где $\Sigma_{\varepsilon}$ обозначает семейство всех пересечений
сфер $\widetilde{S}(p_0, r)$ с областью $D,$ $r\in (\varepsilon,
\varepsilon_0),$
$$
\Vert Q\Vert_{n-1}(r)=\left(\int\limits_{D(p_0,r)}
Q^{n-1}(p)\,\mathcal{H}^{n-1}_{\widetilde{h}}\right)^{\,1/(n-1)}$$
-- $L_{n-1}$-норма функции $Q$ в $D\cap
\widetilde{S}(p_0,r)=D(p_0,r)=\{p\in D\,:\, \widetilde{h}(p,
p_0)=r\}$.

Обратно, если соотношение~(\ref{eq15}) выполнено в некоторой
окрестности $U$ при некоторых $\varepsilon_0>0$ и $M>0,$ то $f$
является нижним $N\cdot Q$-отображением в точке $x_0,$ где $N>0$~---
также некоторая постоянная, зависящая только от окрестности~$U.$}
\end{lemma}

\medskip
 \begin{proof} Перед доказательством утверждения леммы выполним
некоторые необходимые преобразования (см. подпункты~\textbf{I} и
\textbf{II}).

\medskip
\textbf{I.} Не ограничивая общности, можно считать, что число $d_0$
из условия леммы таково, что $\overline{\widetilde{B}(p_0,
d_0)}\subset U,$ где $U$ -- некоторая нормальная окрестность точки
$p_0.$ Для удобства положим
$$P(p)=P(Q, \rho, p):=\frac{\rho^n(p)}{Q(p)},\quad \psi(r):=\inf\limits_{\alpha\in
I(r)}\int\limits_{D(p_0,r)}\frac{\alpha^n(p)}{Q(p)}\
\mathcal{H}^{n-1}_{\widetilde{h}}\,,$$
где $I(r)$ обозначает множество всех измеримых функций $\alpha$ на
сфере $D(p_0,r)=\widetilde{S}(p_0,r)\cap D$, таких что
$\int\limits_{D(p_0,r)}\alpha(p)
\mathcal{H}^{n-1}_{\widetilde{h}}=1.$
Мы можем также считать, что $\Vert Q\Vert(r)<\infty$ при почти всех
$r\in (\varepsilon, \varepsilon_0),$ поскольку в противном случае
утверждение леммы очевидно. Зафиксируем $\rho\in {\rm
ext\,adm}\,\Sigma_{\varepsilon},$ и положим
$A_{\rho}(r):=\int\limits_{D(p_0,r)}\rho^{n-1}(p)\
\mathcal{H}^{n-1}_{\widetilde{h}}$ и заметим, что $A_{\rho}(r)$ ---
измеримая по Лебегу функция относительно параметра $r$ ввиду
леммы~\ref{lem2}. Следовательно, множество $E\subset {\Bbb R}$ всех
таких $r>0,$ для которых
$A_{\rho}(r)=\int\limits_{D(p_0,r)}\rho^{n-1}(p)\
\mathcal{H}^{n-1}_{\widetilde{h}}\ne 0,$ измеримо по Лебегу и,
значит, по лемме~\ref{lem8.2.11}
$A_{\rho}(r)=\int\limits_{D(p_0,r)}\rho^{n-1}(p)\
\mathcal{H}^{n-1}_{\widetilde{h}}\ne 0$ также и при почти всех $r\in
(0, \varepsilon_0).$ Пусть $A_{\Sigma_{\varepsilon}}$ обозначает
класс всех измеримых по Лебегу функций $\rho\colon {\Bbb
B}^n/G\rightarrow\overline{{\Bbb R}^+},$ удовлетворяющих условию
$\int\limits_{D(p_0,
r)}\rho^{n-1}\,\mathcal{H}^{n-1}_{\widetilde{h}}=1$ для почти всех
$r\in (\varepsilon, \varepsilon_0).$ Для удобства обозначим
$D_{\varepsilon}:=D\cap \widetilde{A}(p_0, \varepsilon,
\varepsilon_0).$ Поскольку $A_{\Sigma_\varepsilon}\subset {\rm
ext\,adm}\,\Sigma_{\varepsilon},$  мы получим, что
\begin{equation}\label{eq1AA}
\inf\limits_{\rho\in{\rm
ext\,adm}\,\Sigma_{\varepsilon}}\int\limits_{D_{\varepsilon}}P(p)\,d\widetilde{v}(p)\leqslant
\inf\limits_{\rho\in
A_{\Sigma_\varepsilon}}\int\limits_{D_{\varepsilon}}P(p)\,d\widetilde{v}(p)\,.
\end{equation}

С другой стороны, для заданной функции $\rho\in {\rm
ext\,adm}\,\Sigma_{\varepsilon}$ положим $\beta_0(p):=\rho(p)\cdot
\left(\int\limits_{D(p_0,
r)}\rho^{n-1}(p)\mathcal{H}^{n-1}_{\widetilde{h}}\right)^{\frac{1}{1-n}}.$
Тогда $\rho \in A_{\Sigma_\varepsilon}.$ По лемме~\ref{lem8.2.11}
$A_{\rho}(r)\geqslant 1$ для почти всех $r\in (0, \varepsilon_0).$
Применяя лемму~\ref{lem2}, мы получим, что
$$\inf\limits_{\beta\in A_{\Sigma_\varepsilon}}\int\limits_{D_{\varepsilon}}\beta^n(p)Q^{\,-1}(p)\,
d\widetilde{v}(p)\leqslant $$
 \begin{multline}\label{eq2AB}
\leqslant\int\limits_{D_{\varepsilon}}\beta_0^n(p)Q^{\,-1}(p)\,d\widetilde{v}(p)\leqslant
C_1\cdot\int\limits_{\varepsilon}^{\varepsilon_0}\left(A_{\rho}(r)\right)^{n/(1-n)}
\int\limits_{D(p_0, r)}P(p)\,
\mathcal{H}^{n-1}_{\widetilde{h}}dr\leqslant\\
\leqslant
(C_1/C_2)\cdot\int\limits_{D_{\varepsilon}}P(p)\,d\widetilde{v}(p)\,,
\end{multline}
где $C_1$ и $C_2$ -- постоянные, соответствующие
неравенствам~(\ref{eq6}) для выбранной окрестности~$U.$ Здесь также
был использован тот факт, что точная нижняя грань любой величины не
превосходит любого её фиксированного значения.

\medskip
Из~(\ref{eq1AA}) и~(\ref{eq2AB}) следует, что
\begin{equation}\label{eq12}
(C_2/C_1)\cdot\inf\limits_{\rho\in
A_{\Sigma_\varepsilon}}\int\limits_{D_{\varepsilon}}P(p)\,d\widetilde{v}(p)
\leqslant\inf\limits_{\rho\in{\rm
ext\,adm}\,\Sigma_{\varepsilon}}\int\limits_{D_{\varepsilon}}P(p)\,d\widetilde{v}(p)\leqslant
\inf\limits_{\rho\in
A_{\Sigma_\varepsilon}}\int\limits_{D_{\varepsilon}}P(p)\,d\widetilde{v}(p)\,.
\end{equation}

\textbf{II.} Покажем теперь, что
\begin{equation}\label{eq9A}
C_2\cdot \int\limits_{\varepsilon}^{\varepsilon_0}\psi(r)\,dr
\leqslant \inf\limits_{\rho\in
A_{\Sigma_\varepsilon}}\int\limits_{D_{\varepsilon}}P(p)\
d\widetilde{v}(p)\leqslant C_1\cdot
\int\limits_{\varepsilon}^{\varepsilon_0}\psi(r)\,dr\,,
 \end{equation}
где $C_3, C_4>0$ -- некоторые постоянные.
Прежде всего, по предложению~\ref{Salnizh1}
\begin{equation}\label{eq14}
\psi(r)=(\Vert Q\Vert_{n-1}(r))^{\,-1}=\left(\int\limits_{D(p_0,r)}
Q^{n-1}(p)\,\mathcal{H}^{n-1}_{\widetilde{h}}\right)^{\,-1/(n-1)}\,.
\end{equation}
По лемме~\ref{lem2} функция $\psi(r)$ измерима по $r,$ так что
интегралы в~(\ref{eq9A}) определены корректно. Пусть $\rho\in
A_{\Sigma_\varepsilon}.$ Тогда по лемме~\ref{lem2} функция
$\rho_r(p):=\rho|_{\widetilde{S}(p_0, r)}$ измерима относительно
меры Хаусдорфа~$\mathcal{H}^{n-1}_{\widetilde{h}}$ при почти всех
$r\in (\varepsilon, \varepsilon_0).$  Используя определение точней
нижней грани, из~(\ref{eq6}) и~(\ref{eq14}) мы получаем, что
$$\int\limits_{D_{\varepsilon}}P(p)\,
d\widetilde{v}(p)\geqslant
C_2\cdot\int\limits_{\varepsilon}^{\varepsilon_0}\int\limits_{D(p_0,
r)}\rho_r^n(p)Q^{\,-1}(p) \,\mathcal{H}^{n-1}_{\widetilde{h}}dr
\geqslant
C_2\cdot\int\limits_{\varepsilon}^{\varepsilon_0}\psi(r)dr\,.$$
Переходя здесь к $\inf$ по всем $\rho\in A_{\Sigma_\varepsilon},$
будем иметь
\begin{equation}\label{eq11A}
\inf\limits_{\rho\in
A_{\Sigma_\varepsilon}}\int\limits_{D_{\varepsilon}}P(p)\
d\widetilde{v}(p)\geqslant
C_2\cdot\int\limits_{\varepsilon}^{\varepsilon_0} \psi(r)dr\,.
\end{equation}
Докажем теперь верхнее неравенство в~(\ref{eq9A}). Ввиду
предложения~\ref{Salnizh1} точная нижняя грань выражения
$\psi(\alpha,
r)=\int\limits_{D(p_0,r)}\alpha^{\frac{n}{n-1}}(p)Q^{\,-1}(p)\
\mathcal{H}^{n-1}_{\widetilde{h}}$ по всем $\alpha\in I(r)$
достигается на функции $\alpha_0(p):=
Q^{n-1}(p)\left(\int\limits_{D(p_0,r)}
Q^{n-1}(p)\,\mathcal{H}^{n-1}_{\widetilde{h}}\right)^{\,-1}.$
Заметим, что $\alpha_0\in A_{\Sigma_\varepsilon},$ поскольку по
предположению $\Vert Q\Vert(r)<\infty$ при почти всех $r\in
(\varepsilon, \varepsilon_0).$ Значит,
\begin{equation}\label{eq13}
\inf\limits_{\rho\in
A_{\Sigma_\varepsilon}}\int\limits_{D_{\varepsilon}}P(p)\
d\widetilde{v}(p)\leqslant
\int\limits_{D_{\varepsilon}}\alpha_0^{\frac{n}{n-1}}(p)Q^{\,-1}(p)\
d\widetilde{v}(p)\leqslant C_1\cdot
\int\limits_{\varepsilon}^{\varepsilon_0} \psi(r)\,dr\,.
\end{equation}
Из~(\ref{eq11A}) и~(\ref{eq13}) следует~(\ref{eq9A}).~$\Box$

\medskip
\textbf{III.} Докажем теперь утверждение леммы.

\medskip
а) Пусть $f$~--- нижнее $Q$-отображение в точке $p_0.$ Тогда, по
определению, $f$ удовлетворяет соотношению~(\ref{eq1A}). Однако,
ввиду соотношений~(\ref{eq12}) и~(\ref{eq9A}) имеем также
неравенство~(\ref{eq15}) с некоторой постоянной
$M:=\frac{C_2^2}{C_1}.$

\medskip
б) Пусть, напротив, мы имеем соотношение~(\ref{eq15}) c некоторой
постоянной $M>0$ в фиксированной окрестности $U$ точки $p_0.$ Тогда
ввиду соотношений~(\ref{eq12}) и~(\ref{eq9A}) отображение $f$
является нижним $M\cdot Q$-отображением в точке $p_0,$ где
$N:=C_1/M.$ Лемма доказана.~$\Box$
\end{proof}

\medskip
{\bf 3.~Доказательство теоремы~\ref{thOS4.1}}. Поскольку $f$
открыто, то отображение $f$ дифференцируемо почти всюду в $D$
локальных координатах (см.~\cite[теорема~1]{KRSS}). Пусть $B$ --
борелево множество всех точек $p\in D,$ где $f$ имеет полный
дифференциал $f^{\,\prime}(p)$ и $J(p, f)\ne 0$ в локальных
координатах. Заметим, что $B$ может быть представлено в виде не
более, чем счётного объединения борелевских множеств $B_l$,
$l=1,2,\ldots\,,$ таких что $f_l=f|_{B_l}$ являются билипшецевыми
гомеоморфизмами (см. \cite[пункты~3.2.2, 3.1.4 и 3.1.8]{Fe}), см.
рисунок~\ref{fig4} для иллюстрации.
\begin{figure}[h]
\centerline{\includegraphics[scale=0.5]{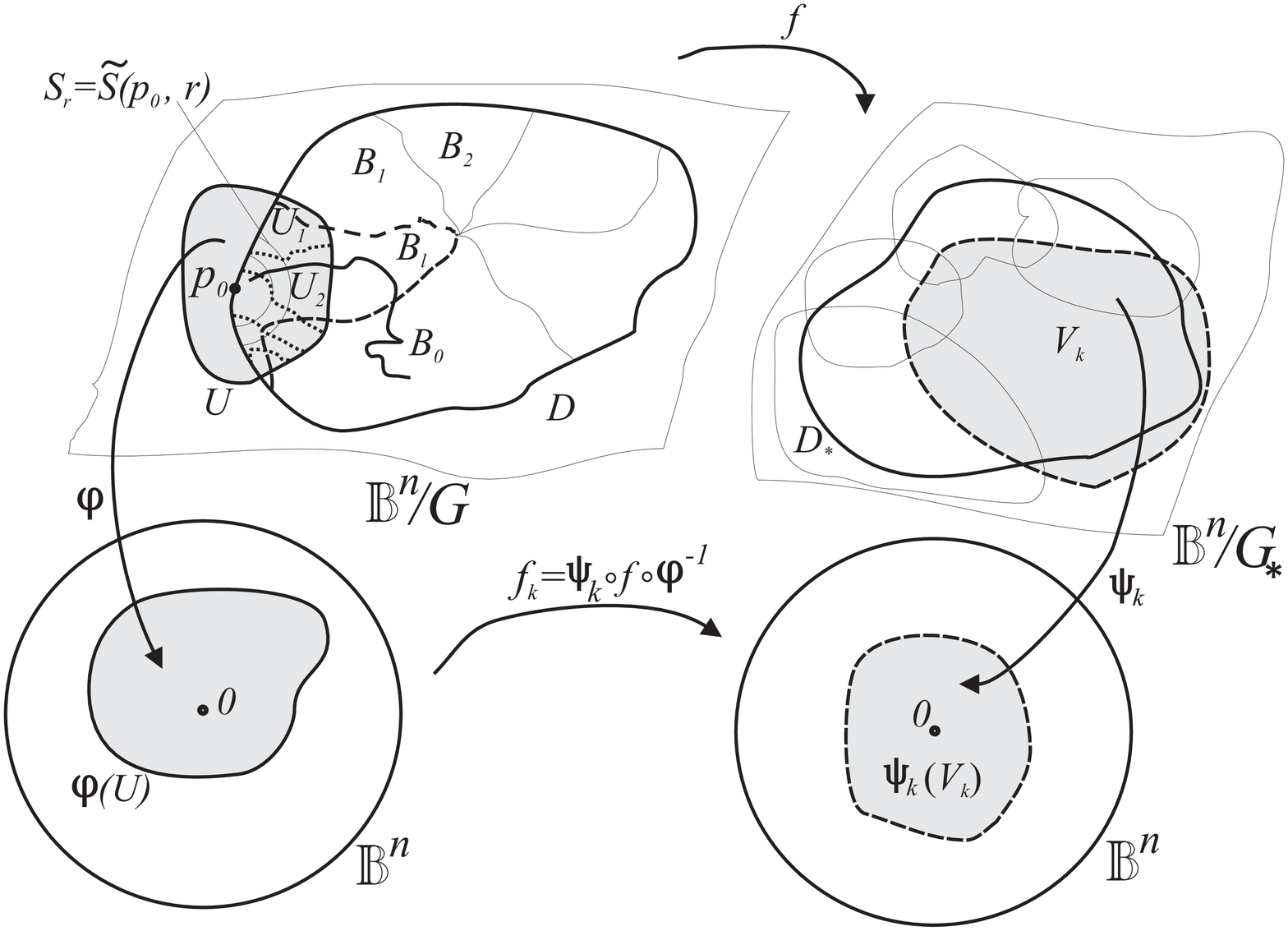}} \caption{К
доказательству теоремы~\ref{thOS4.1}}\label{fig4}
\end{figure}
Без ограничения общности, мы можем считать, что множества $B_l$
попарно не пересекаются. Обозначим также символом $B_*$ множество
всех точек $p\in D,$ где $f$ имеет полный дифференциал и
$f^{\,\prime}(p)=0.$

Поскольку $f$ -- конечного искажения, $f^{\,\prime}(p)=0$ для почти
всех точек $p,$ где $J(p, f)=0.$ Таким образом, по построению
множество $B_0:=D\setminus \left(B\bigcup B_*\right)$ имеет нулевую
$\widetilde{v}$-меру. Пусть $U$ -- нормальная окрестность точки
$p_0$ и $\varphi:U\rightarrow {\Bbb B}^n$ -- отображение,
соответствующее этой нормальной окрестности. Мы можем считать, что
$\varphi(U)\subset B(0, r_0),$ $0<r_0<1.$ Поскольку $\overline{D_*}$
-- компакт в ${\Bbb B}^n/G_*,$ мы можем покрыть $\overline{D_*}$
конечным числом нормальных окрестностей $V_k,$ $k=1,2,\ldots, m,$
таких, что $\psi_k: V_k\rightarrow B(0, R_k),$ $0<R_k<1,$ и $\psi_k$
-- отображения, соответствующие определению нормальной окрестности.
Пусть $R_0:=\max\limits_{1\leqslant k\leqslant m}R_k.$ Тогда ввиду
непрерывности отображения $f$ множества
$U^{\,\prime}_k:=f^{\,-1}(V_k\cap D_{\,*})\cap U$ являются открытыми
в $U$ и отображение
$$f_k:=\psi_k\circ f\circ\varphi^{\,-1}$$
является отображением из $\varphi(U^{\,\prime}_k)\subset B(0, r_0)$
в $\psi_k(V_k)\subset B(0, R_0).$

\medskip
Положим теперь $U_1=U^{\,\prime}_1,$ $U_2=U^{\,\prime}_2\setminus
U^{\,\prime}_1,$ $U_3=U^{\,\prime}_3\setminus (U^{\,\prime}_1\cup
U^{\,\prime}_2),$ $\ldots, U_m=U^{\,\prime}_m\setminus
(U^{\,\prime}_1\cup U^{\,\prime}_2\ldots U^{\,\prime}_{m-1}).$
Заметим, что по определению $U_m\subset U^{\,\prime}_m$ при
$m\geqslant 1$ и $U_s\cap U_k=\varnothing$ при $s\ne k.$ Пусть
$\Gamma$ -- семейство $D_r:=D\cap S_r$ всех пересечений сфер
$S_r=\widetilde{S}(p_0, r),$ $r\in(\varepsilon, r_0),$ с областью
$D.$ Зафиксируем допустимую функцию $\rho_*\in{\rm adm}\,f(\Gamma),$
$\rho_*\equiv 0$ вне $f(D)$, и положим $\rho\equiv 0$ вне $U$ и на
$B_0,$ и
$$\rho(p)\colon=\rho_*(f(p))\Vert f_k^{\,\prime}
(\varphi(p))\Vert \qquad\text{при}\ p\in U_k\setminus B_0\,,$$
где матричная норма производной $\Vert g^{\,\prime}(x)\Vert$
определена первым соотношением в~(\ref{eq1E}). Заметим, что
$$D_r:=D\cap
S_r=\left(\bigcup\limits_{1\leqslant k\leqslant m\atop 1\leqslant
l<\infty} D^{\,r}_{kl}\right)\bigcup \left(\bigcup\limits_{k=1}^m
S_r\cap U_k\cap B_*\right)\bigcup \left(\bigcup\limits_{k=1}^m
S_r\cap U_k\cap B_0\right)\,,$$
где $D^{\,r}_{kl}=S_r\cap U_k\cap B_l.$ %Тогда также
%
%$$D_{r}^{\,*}=f(S_r\cap D)=$$$$=\left(\bigcup\limits_{1\leqslant k\leqslant m\atop 1\leqslant
%l<\infty} f(D^{\,r}_{kl})\right)\bigcup \left(\bigcup\limits_{k=1}^m
%f(S_r\cap U_k\cap B_*)\right)\bigcup \left(\bigcup\limits_{k=1}^m
%f(S_r\cap U_k\cap B_0)\right)\,.$$
Положим $D^{\,*}_r=f(D_r).$ По~\cite[теорема~4, следствие~4]{KRSS},
учитывая неравенства~(\ref{eq1C}), имеем:
$\mathcal{H}_{\widetilde{h}}^{n-1}(f(U_k\cap B_0\cap D_r))=0$ и
$\mathcal{H}_{\widetilde{h}}^{n-1}(f(U_k\cap B_*\cap D_r))=0$ при
каждом $1\leqslant k\leqslant m$ и почти всех $r\in (0, r_0).$
Тогда, используя неравенства в~(\ref{eq1C}), при почти всех $r\in
(0, r_0)$ будем иметь:
$$1\leqslant\int\limits_{D^{\,*}_r}\rho^{n-1}_*(p_*)\cdot N(p_*, f,
D_r)\,d\mathcal{H}_{\widetilde{h_*}}^{n-1}(p_*)\leqslant$$$$\leqslant
\sum\limits_{k=1}^m
\sum\limits_{l=1}^{\infty}\,\,\,\int\limits_{\psi_k(f(D^{\,r}_{kl}))}
\rho^{n-1}_*(\psi_k^{\,-1}(y))\cdot N(y, \psi_k\circ f, D^{\,r}_{kl}
)\,d\mathcal{H}_h^{n-1}(y)\leqslant$$
\begin{equation}\label{eq27}\leqslant\frac{1}{c^*_1}\sum\limits_{k=1}^m
\sum\limits_{l=1}^{\infty}\,\,\,\int\limits_{\psi_k(f(D^{\,r}_{kl}))}
\rho^{n-1}_*(\psi_k^{\,-1}(y))N(y, \psi_k\circ f,
D^{\,r}_{kl})\,d\mathcal{H}^{n-1}(y)\,,
\end{equation}
где $c^*_1$ -- постоянная, участвующая в~(\ref{eq1C}) и
соответствующая $E=B_h(0, r_0).$ Рассуждая на каждом
множестве~$D^{\,r}_{kl}$ в отдельности и используя
\cite[пункт~1.7.6, теорема~2.10.43 и теорема~3.2.5]{Fe}, мы
получаем, что
$$\int\limits_{D^{\,r}_{kl}}\rho^{n-1}(p)\,d\mathcal{H}_{\widetilde{h}}^{n-1}(p)
=\int\limits_{\varphi(D^{\,r}_{kl})}\rho^{n-1}_*(\psi^{\,-1}_k(f_k(x))\Vert
f_k^{\,\prime}(x)\Vert^{n-1}\,\,d\mathcal{H}_{\widetilde{h}}^{n-1}(x)\geqslant$$
\begin{equation}\label{eq29}
\geqslant\int\limits_{\varphi(D^{\,r}_{kl})}\rho^{n-1}_*(\psi^{\,-1}_k(f_k(x))\Vert
f_k^{\,\prime}(x)\Vert^{n-1}\,\,d\mathcal{H}^{n-1}(x)\geqslant$$$$\geqslant
\int\limits_{\psi_k(f(D^{\,r}_{kl}))}\rho^{n-1}_*(\psi_k^{\,-1}(y))\cdot
N(y, \psi_k\circ f, D^{\,r}_{kl}) \,d\mathcal{H}^{n-1}(y)\,.
\end{equation}
Суммируя~(\ref{eq29}) по всем $1\leqslant k\leqslant m$ и
$1\leqslant l<\infty,$ и учитывая~(\ref{eq27}), а также
лемму~\ref{lem8.2.11}, заключаем, что
$\rho/c^*_1\in{\rm{ext\,adm}}\,\Gamma.$

\medskip
Используя замену переменных на каждом $B_l$, $l=1,2,\ldots$ (см.,
напр., \cite[теорема~3.2.5]{Fe}), свойство счётной аддитивности
интеграла Лебега, а также учитывая~(\ref{eq1}), получаем оценку:
$$\frac{1}{c_1^{*n}}\int\limits_{D}\frac{\rho^n(p)}{K_O(p, f)}\,d\widetilde{v}(p)=$$$$=
\frac{2^n}{c_1^{*n}}\sum\limits_{k=1}^m
\sum\limits_{l=1}^{\infty}\int\limits_{\varphi(U_k\cap
B_l)}\frac{\rho^n_*((f\circ \varphi^{-1})(x))\Vert f_k^{\,\prime}
(x)\Vert^n}{(1-|x|^2)^nK_O(\varphi^{\,-1}(x), f)}\,dm(x)\leqslant$$
$$\leqslant\frac{2^n}{c_1^{*n}(1-r^2_0)^n}\sum\limits_{k=1}^m
\sum\limits_{l=0}^{\infty}\int\limits_{{\Bbb
B}^n/G_*}\rho^n_*(\psi_k^{\,-1}(y))N(y, f_k, \varphi(U_k\cap
B_l))\,dm(y)\leqslant $$$$ \leqslant\frac{2^n}{c_1^{*n}(1-r^2_0)^n}
\sum\limits_{k=1}^m\int\limits_{{\Bbb
B}^n/G_*}\rho^n_*(\psi_k^{\,-1}(y))N(y, f_k,
\varphi(U_k))\,dm(y)\leqslant$$$$\leqslant
\frac{2^n}{c_1^{*n}(1-r^2_0)^n}
\sum\limits_{k=1}^m\int\limits_{{\Bbb
B}^n/G_*}\frac{\rho^n_*(\psi_k^{\,-1}(y))N(\psi^{\,-1}_k(y), f,
U_k)}{(1-|y|^2)^n}\,dm(y)=$$
$$=\frac{1}{c_1^{*n}(1-r^2_0)^n}
\sum\limits_{k=1}^m\int\limits_{{\Bbb B}^n/G_*}\rho^n_*(p_*)N(p_*,
f, U_k)\,d\widetilde{v_*}(p_*)\leqslant$$
$$\leqslant\frac{N(f,
D)}{c_1^{*n}(1-r^2_0)^n} \int\limits_{{\Bbb
B}^n/G_*}\rho^n_*(p_*)\,d\widetilde{v_*}(p_*)\,.$$
Для завершения доказательства следует положить
$c:=\frac{1}{c_1^{*n}(1-r^2_0)^n}.~\Box$

\medskip
{\bf 4. О гиперболическом и евклидовом модулях семейств кривых и
поверхностей}. Для дальнейшего исследования нам полезно будет
использовать следующие небольшие договорённости.

\medskip
Борелевская функция $\rho:{\Bbb B}^n\rightarrow [0, \infty]$ будет
называться {\it допустимой для $\Gamma$  гиперболическом смысле},
пишем $\rho\in {\rm adm}_h\,\Gamma,$ если
$\int\limits_{\gamma}\rho(x)\,ds_h(x)\geqslant 1$ для всякой
(локально спрямляемой) кривой $\gamma\in \Gamma,$ где $ds_h$ --
элемент длины, соответствующий гиперболической метрике $h$
в~(\ref{eq3}). {\it Модулем семейства $\Gamma$ в гиперболическом
смысле} называется величина, определённая равенством
$$M_h(\Gamma):=\inf\limits_{\rho\in {\rm adm}_h\,\Gamma}\int\limits_{{\Bbb
B}^n}\rho^n(x)\,dv(x)\,.$$
Для борелевской функции $\rho\colon{\Bbb B}^n\rightarrow[0,\infty]$
её {\it интеграл над $(n-1)$-мерной поверхностью $S$ в
гиперболическом смысле} определяем равенством
\begin{equation*}
\int\limits_S \rho\ d\mathcal{A}_h\ :=\ \int\limits_{{\Bbb
B}^n}\rho(y)\:N(S,y)\ d\mathcal{H}_h^{n-1}y\,,\end{equation*}
где $\mathcal{H}_h^{n-1}$ есть хаусдорфова $(n-1)$-мерная мера,
определённая при помощи гиперболической метрики $h.$ Борелевскую
функцию $\rho\colon{\Bbb B}^n\rightarrow\overline{{\Bbb R}^+}$ будем
называть {\it допустимой для семейства $\Gamma$ в гиперболическом
смысле},  сокр.~$\rho\in{\rm adm}_h\,\Gamma,$ если
\begin{equation*}
\int\limits_S\rho^{n-1}\,d\mathcal{A}_h\geqslant 1\end{equation*}
для каждой поверхности $S\in\Gamma.$
Модуль семейства $\Gamma$ поверхностей $S$ в гиперболическом смысле
определяется соотношением
$$M_h(\Gamma)=\inf\limits_{\rho\in{\rm adm}_h\,\Gamma}
\int\limits_{{\Bbb B}^n}\rho^n(x)\,dv(x)\,.$$

\medskip
Аналогично определяется модуль $M_e$ семейств кривых и поверхностей
в евклидовом смысле, а также семейство допустимых функций ${\rm
adm}_e\,\Gamma$ для семейств кривых (поверхностей) $\Gamma$ в~${\Bbb
B}^n.$ А именно, борелевская функция $\rho:{\Bbb B}^n\rightarrow [0,
\infty]$ называется {\it допустимой для $\Gamma$ в евклидовом
смысле}, пишем $\rho\in {\rm adm}_e\,\Gamma,$ если
$\int\limits_{\gamma}\rho(x)\,|dx|\geqslant 1$ для всякой (локально
спрямляемой) кривой $\gamma\in \Gamma,$ где $|dx|$ -- элемент
евклидовой длины. {\it Модулем семейства $\Gamma$ в евклидовом
смысле} называется величина, определённая равенством
$$M_e(\Gamma):=\inf\limits_{\rho\in {\rm adm}_e\,\Gamma}\int\limits_{{\Bbb
B}^n}\rho^n(x)\,dm(x)\,.$$
Для борелевской функции $\rho\colon{\Bbb B}^n\rightarrow[0,\infty]$
её {\it интеграл над $(n-1)$-мерной поверхностью $S$ в евклидовом
смысле} определяем равенством
\begin{equation*}
\int\limits_S \rho\ d\mathcal{A}\ :=\ \int\limits_{{\Bbb
B}^n}\rho(y)\:N(S,y)\ d\mathcal{H}^{n-1}y\,,\end{equation*}
где $\mathcal{H}^{n-1}$ есть хаусдорфова $(n-1)$-мерная мера,
определённая при помощи евклидовой метрики $|\cdot|.$ Борелевскую
функцию $\rho\colon{\Bbb B}^n\rightarrow\overline{{\Bbb R}^+}$ будем
называть {\it допустимой для семейства $\Gamma$ в евклидовом
смысле}, сокр.~$\rho\in{\rm adm}_e\,\Gamma,$ если
\begin{equation*}
\int\limits_S\rho^{n-1}\,d\mathcal{A}\geqslant 1\end{equation*} для
каждой поверхности $S\in\Gamma.$
Модуль семейства $\Gamma$ поверхностей $S$ в евклидовом смысле
определяется соотношением
$$M_e(\Gamma)=\inf\limits_{\rho\in{\rm adm}_e\,\Gamma}
\int\limits_{{\Bbb B}^n}\rho^n(x)\,dm(x)\,.$$

\medskip
{\bf 5. О верхних оценках искажения модуля.} Для простоты рассмотрим
случай, когда группа $G_*,$ соответствующая второму пространству
${\Bbb B}^n/G_*,$ состоит из одного отображения $g(x)=x$ (другими
словами, ${\Bbb B}^n/G_*$ -- единичный круг ${\Bbb B}^n$ с
гиперболической метрикой $h$ и гиперболическим объёмом $v$
(см.~(\ref{eq2B}) и~(\ref{eq3})).

\medskip
Пусть $n\geqslant 2,$ $p_0\in{\Bbb B}^n/G,$ где $G$ -- некоторая
группа мёбиусовых автоморфизмов единичного шара, действующих
разрывно в ${\Bbb B}^n$ и не имеющих в ${\Bbb B}^n$ неподвижных
точек, $Q:U\rightarrow [0, \infty]$ -- измеримая относительно меры
$\widetilde{v}$ функция. Обозначим
\begin{equation}\label{eq32*}
q_{p_0}(r):=\frac{1}{\omega_{n-1}r^{n-1}}\int\limits_{\widetilde{S}(p_0,
r)}Q(p)\,d\mathcal{H}_{\widetilde{h}}^{n-1}\,,
\end{equation}
где $\omega_{n-1}$ -- площадь единичной сферы~${\Bbb S}^{n-1}$ в
${\Bbb R}^n.$ По лемме~\ref{lem2} интеграл в~(\ref{eq32*}) определён
корректно при почти всех $0<r_0<\widetilde{h}(p_0,
\partial U),$ кроме того, по той же лемме $q_{p_0}(r)$ измерима по
$r.$ Начнём со следующего простого утверждения (см.
также~\cite[лемма~7.4]{MRSY}).

\medskip
\begin{proposition}\label{pr1B}
{\sl\, Пусть $n\geqslant 2,$ $p_0\in{\Bbb B}^n/G,$ где $G$ --
некоторая группа мёбиусовых автоморфизмов единичного шара,
действующих разрывно в ${\Bbb B}^n$ и не имеющих в ${\Bbb B}^n$
неподвижных точек.

Пусть $U$ -- некоторая нормальная окрестность точки $p_0\in {\Bbb
B}^n/G,$ $Q:U\rightarrow [0, \infty]$ -- измеримая относительно меры
$\widetilde{v}$ функция, интегрируемая в $U$ относительно меры
$\widetilde{v}.$ Положим
\begin{equation}\label{eq8B}
 \eta_0(r)=\frac{1}{Irq_{p_0}^{\frac{1}{n-1}}(r)}\,,
\end{equation}
где
\begin{equation}\label{eq9B}
I=I(p_0,r_1,r_2)=\int\limits_{r_1}^{r_2}\
\frac{dr}{rq_{p_0}^{\frac{1}{n-1}}(r)}\,.
\end{equation}
Тогда существуют постоянные $M_1$ и $M_2>0,$ зависящие только от
окрестности $U,$ такие, что
\begin{equation}\label{eq10B}
\frac{\omega_{n-1}}{I^{n-1}}\leqslant
M_1\cdot\int\limits_{\widetilde{A}} Q(p)\cdot
\eta_0^n(\widetilde{h}(p, p_0))\ d\widetilde{v}(p)\leqslant
M_2\cdot\int\limits_{\widetilde{A}} Q(p)\cdot
\eta^n((\widetilde{h}(p, p_0)))\ d\widetilde{v}(p)\,,
\end{equation}
где $\widetilde{A}=\widetilde{A}(p_0, r_1, r_2)=\{p\in {\Bbb B}^n/G:
r_1<\widetilde{h}(p, p_0)<r_2\},$ а $\eta: (r_1,r_2)\rightarrow
[0,\infty]$ -- произвольная измеримая по Лебегу функция,
удовлетворяющая условию
\begin{equation}\label{eq8AA}
\int\limits_{r_1}^{r_2}\eta(r)\,dr=1\,.
\end{equation}
}
\end{proposition}
\begin{proof}
Прежде всего, установим первое слева неравенство в~(\ref{eq10B}).
Применяя лемму~\ref{lem2}, получаем:
$$\int\limits_{\widetilde{A}} Q(p)\cdot
\eta_0^n(\widetilde{h}(p, p_0))\ d\widetilde{v}(p)\geqslant C_2\cdot
\int\limits_{r_1}^{r_2}\int\limits_{\widetilde{S}(p_0,
r)}Q(p)\cdot\eta_0^n(\widetilde{h}(p, p_0))\
\,d\mathcal{H}_{\widetilde{h}}^{n-1}dr=$$
\begin{equation}\label{eq16}
=C_2\cdot
\int\limits_{r_1}^{r_2}\left(\frac{1}{I^nr^nq_0^{n/(n-1)}(r)}\cdot\omega_{n-1}r^{n-1}\cdot\left(\frac{1}{\omega_{n-1}r^{n-1}}\cdot
\int\limits_{\widetilde{S}(p_0, r)}Q(p)
\,d\mathcal{H}_{\widetilde{h}}^{n-1}\right)\right)dr=
\end{equation}
$$=\frac{C_2\cdot\omega_{n-1}}{I^{n-1}}\,.$$
Здесь $C_2$ -- постоянная из леммы~\ref{lem2}, соответствующая
шару~$\widetilde{B}(p_0, d_0),$ $d_0:=\widetilde{h}(p_0, \partial
U),$ и зависящая только от окрестности $U.$ Первое неравенство
в~(\ref{eq10B}) установлено, если положить в нём $M_1:=1/C_2.$

\medskip
Осталось установить второе неравенство в~(\ref{eq10B}). Прежде
всего, заметим, что если $I=\infty,$ то величина
$\frac{\omega_{n-1}}{I^{n-1}}$ в~(\ref{eq10B}) равна нулю. Тогда и
интеграл $J:=\int\limits_{\widetilde{A}} Q(p)\cdot
\eta_0^n(\widetilde{h}(p, p_0))\ d\widetilde{v}(p)$ равен нулю,
поскольку по лемме~\ref{lem2}
\begin{equation}\label{eq18}
J\leqslant C_1\cdot
\int\limits_{r_1}^{r_2}\int\limits_{\widetilde{S}(p_0,
r)}Q(p)\cdot\eta_0^n(\widetilde{h}(p, p_0))\
\,d\mathcal{H}_{\widetilde{h}}^{n-1}dr=\frac{C_1\cdot\omega_{n-1}}{I^{n-1}}\,.
\end{equation}
Таким образом, второе неравенство в~(\ref{eq10B}) очевидно при
$I=\infty.$ Заметим также, что $I\ne 0,$ поскольку
$q_{x_0}(r)<\infty$ для п.в. $r\in(r_1, r_2)$ ввиду условия $Q\in
L^1(U)$ и по лемме~\ref{lem2}.

\medskip
Предположим, $0<I<\infty.$ Тогда из (\ref{eq8B}) и (\ref{eq9B})
следует, что $q_{p_0}(r)\ne 0$ и $\eta_0(r)\ne\infty$ п.в. в
$(r_1,r_2).$ Полагаем
$$\alpha(r)=rq_{p_0}^{\frac{1}{n-1}}(r)\eta(r)\,,\quad
w(r)=1/rq_{p_0}^{\frac{1}{n-1}}(r)\,.$$
При почти всех $r\in(r_1,r_2)$ будем иметь:
$$\eta(r)=\alpha(r)w(r)$$
и, кроме того, по лемме~\ref{lem2}
\begin{equation}\label{2.3.16}
C\colon=\int\limits_{\widetilde{A}(p_0, r_1, r_2)} Q(p)\cdot
\eta^n(\widetilde{h}(p, p_0))\,d\widetilde{v}(p)
\geqslant\omega_{n-1}\cdot C_2\cdot
\int\limits_{r_1}^{r_2}\alpha^n(r)\cdot w(r)\,dr\,.
\end{equation}
Применяя неравенство Иенсена с весом к выпуклой функции
$\varphi(t)=t^n\,,$ заданной в интервале
$\Omega\,=\,\left(r_1,r_2\right)$ с вероятностной мерой
$\nu(E)=\frac{1}{I}\int\limits_E w(r)dr$ (см.
\cite[Теорема~2.6.2]{Ran}) и делая замену переменных относительно
меры (см. \cite[теорема~15.1, гл.~I]{Sa}), получаем, что
получаем, что
%
%\begin{equation}\label{eq2.3.18}
$$\left(\frac{1}{I}\int\alpha^n(r)w(r)dr\right)^{1/n}\geqslant
\frac{1}{I}\int\alpha(r)w(r)dr=\frac{1}{I}\,,$$
%\end{equation}
%
где мы также использовали тот факт, что $\eta(r)=\alpha(r)\omega(r)$
удовлетворяет соотношению (\ref{eq8AA}). Отсюда,
учитывая~(\ref{eq18}) и (\ref{2.3.16}),  получаем:
$$
C\geqslant\frac{\omega_{n-1}\cdot C_2}{I^{n-1}}\geqslant
\frac{C_2}{C_1}\cdot J\,,
$$
что и доказывает второе неравенство в~(\ref{eq10B}) при
$M_1/M_2:=C_2/C_1,$ или, что то же самое, при $M_2=C_1/C_2^2.$
$\Box$
\end{proof}

\medskip
Заметим, что
\begin{equation}\label{eq19}
\int\limits_{\varepsilon}^{\varepsilon_0}\frac{dr}{\Vert
Q\Vert_{n-1}(r)}=
\int\limits_{\varepsilon}^{\varepsilon_0}\frac{dr}{\left(\int\limits_{D(p_0,r)}
Q^{n-1}(p)\,\mathcal{H}^{n-1}_{\widetilde{h}}\right)^{\,1/(n-1)}}=\frac{1}{\omega^{1/(n-1)}_{n-1}}\cdot\int
\limits_{\varepsilon}^{\varepsilon_0}\frac{dr}{r{\widetilde{q}}^{1/(n-1)}_{p_0}(r)}\,,
\end{equation}
где~$\widetilde{q}_{p_0}(r)$ -- среднее значение функции $Q^{n-1}$
по сфере $\widetilde{S}(p_0, r),$ определённое
соотношением~(\ref{eq32*}).

\medskip
Пусть $D$ -- область в ${\Bbb B}^n/G,$  и пусть $E,$ $F\subset D$ --
произвольные множества. В дальнейшем через $\Gamma(E,F, D)$ мы
обозначаем семейство всех кривых $\gamma:[a,b]\rightarrow D,$
которые соединяют $E$ и $F$ в $D,$ т.е. $\gamma(a)\in
E,\,\gamma(b)\in F$ и $\gamma(t)\in D$ при $t\in(a,\,b).$
Согласно~\cite[разд.~7]{MRSY} отображение $f:D\rightarrow D_*$ будем
называть {\it кольцевым $Q$-отоб\-ра\-же\-нием в точке $p_0\in
\overline{D}$}, если существует $0<r_0<\infty$ такое, что при любых
$0<r_1<r_2<r_0$ выполнено неравенство
\begin{equation}\label{eq1F}
M(f(\Gamma(S_1, S_2, D)))\leqslant \int\limits_{\widetilde{A}}
Q(p)\cdot \eta^n(\widetilde{h}(p, p_0))\,d\widetilde{v}(p)\,,
\end{equation}
где $S_1=\widetilde{S}(p_0, r_1),$ $S_2=\widetilde{S}(p_0, r_2),$
$\widetilde{A}=\widetilde{A}(p_0, r_1, r_2)=\{p\in {\Bbb B}^n/G:
r_1<\widetilde{h}(p, p_0)<r_2\}$ и $\eta:(r_1, r_2)\rightarrow [0,
\infty]$ -- произвольная измеримая по Лебегу функция,
удовлетворяющая условию
\begin{equation}\label{eq*3!!}
\int\limits_{r_1}^{r_2}\eta(r)\,dr\geqslant 1\,.
\end{equation}

\medskip
Имеет место следующее утверждение, аналог которого был доказан ранее
в~\cite[следствие~5]{KRSS} для случая евклидова пространства.

\medskip
\begin{lemma}\label{lem1}
{\sl\, Пусть $n\geqslant 2,$ $G$ -- некоторая группа мёбиусовых
автоморфизмов единичного шара, действующая разрывно в ${\Bbb B}^n$ и
не имеющая в ${\Bbb B}^n$ неподвижных точек. Пусть также группа
$G_*,$ соответствующая пространству ${\Bbb B}^n/G_*,$ состоит из
одного отображения $g(x)=x.$ Предположим, $D$ и $D_{\,*}$~---
области, принадлежащие ${\Bbb B}^n/G$ и ${\Bbb B}^n/G_*$
соответственно, при этом, $\overline{D}$ и $\overline{D_{\,*}}$
являются компактами, кроме того, функция $Q:{\Bbb B}^n/G\rightarrow
(0, \infty)$ интегрируема в некоторой окрестности точки $p_0\in
\overline{D}.$ Тогда каждый нижний $Q$-гомеоморфизм в точке $p_0\in
\overline{D}$ является кольцевым $\widetilde{C}\cdot
Q^{n-1}$-гомеоморфизмом в этой же точке, где $\widetilde{C}>0$ --
некоторая постоянная, зависящая только от окрестности $U.$ }
\end{lemma}

\medskip
\begin{proof}
%Пусть $M_e$ обозначает модуль семейств кривых (поверхностей),
%соответствующий евклидовой метрике и лебеговой мере в~${\Bbb R}^n$
%(см., напр., \cite[пункты~2.2 и 9.2]{MRSY}).
Пусть $U$ -- нормальная окрестность точки $p_0,$
$0<r_0<\widetilde{h}(p_0, \partial U)$ и $\pi$ -- естественная
проекция шара $B_h(0, r_0)$ на шар $\widetilde{B}(p_0, r_0)$ в
$U\subset {\Bbb B}^n/G.$ Заметим, что при $0<r_1<r_2<r_0$ сферы
$S_1=\widetilde{S}(p_0, r_1)$ и $S_2=\widetilde{S}(p_0, r_2)$
являются гомеоморфными образами евклидовых сфер $S^*_1=S(0, r^*_1)$
и $S^*_2=S(0, r^*_2)$ некоторых радиусов $0<r_1^*<r_2^*.$ Положим
$A^*=\{x\in {\Bbb B}^n: r^*_1<|x|<r^*_2\}.$ Рассматривая
вспомогательное отображение~$\widetilde{f}:=f\circ(\pi^{\,-1}|_U)$
шара $B_h(0, r_0)$ в ${\Bbb B}^n$ и применяя равенства Цимера и
Шлыка (см., соответственно,~\cite[теорема~3.13]{Zi}
и~\cite[теорема~1]{Shl}), получаем:
$$
M_e(f(\Gamma(S_1, S_2, D)))=M_e(f(\Gamma(S_1, S_2, D\cap
A)))=M_e(\widetilde{f}(\Gamma(S^*_1, S^*_2, \pi^{\,-1}|_U(A^*\cap
D))))\leqslant
$$
\begin{equation}\label{eq20}
\leqslant M_e(\Gamma(\widetilde{f}(S^*_1), \widetilde{f}(S^*_2),
{\Bbb B}^n))\leqslant \frac{1}{M^{n-1}_e(f(\Sigma_{r_1, r_2}))}\,,
\end{equation}
где~$\Sigma_{r_1, r_2}$ обозначает семейство пересечений сфер
$\widetilde{S}(p_0, r),$ $r\in (r_1, r_2),$ с областью $D.$
Рассуждая аналогично~\cite[пункт~1, замечание~5.2]{Sev$_1$}, можно
показать, что
\begin{equation}\label{eq21}
M_e(f(\Gamma(S_1, S_2, D)))=M_h(f(\Gamma(S_1, S_2,
D)))=M(f(\Gamma(S_1, S_2, D)))\,.
\end{equation}
Покажем, что найдётся постоянная $C>0$ такая, что
\begin{equation}\label{eq22}
C\cdot M_e(f(\Sigma_{r_1, r_2}))\geqslant  M(f(\Sigma_{r_1,
r_2}))\,.
\end{equation}
В самом деле, пусть $\rho\in{\rm adm}_e\,f(\Sigma_{r_1, r_2}).$
Поскольку $\mathcal{H}^{n-1}(E)\leqslant \mathcal{H}_h^{n-1}(E)$ для
произвольного $\mathcal{H}_h^{n-1}$-измеримого множества $E$
согласно неравенству~(\ref{eq1C}), то и $$\int\limits_S \rho^{n-1}\
d\mathcal{\widetilde{A}}=\int\limits_{{\Bbb
B}^n/G}\rho^{n-1}(y)\:N(S,y)\ d\mathcal{H}^{n-1}_hy\geqslant 1$$ для
всех $S\in f(\Sigma_{r_1, r_2}),$ то есть, $\rho\in{\rm
adm}\,f(\Sigma_{r_1, r_2}).$ Заметим также, что
\begin{equation}\label{eq23}M(f(\Sigma_{r_1, r_2}))=\inf\limits_{\rho\in{\rm
adm}\,f(\Sigma_{r_1, r_2})} \int\limits_{{\Bbb
B}^n}\rho^n(p)\,dv(p)\leqslant \int\limits_{{\Bbb
B}^n}\rho^n(p)\,dv(p)\leqslant C\cdot\int\limits_{{\Bbb
B}^n}\rho^n(p)\,dm(p)\,,
\end{equation}
так как по условию область $D_*,$ содержащая семейство
$f(\Sigma_{r_1, r_2}),$ является компактным подмножеством единичного
шара и, значит, множитель $\frac{2^n}{{(1-|x|^2)}^n},$ участвующий в
элементе объёма $dv(p)$ в~(\ref{eq2B}), ограничен сверху в
$\overline{D_*}.$ Переходя в~(\ref{eq23}) к $\inf$ по всем
$\rho\in{\rm adm}_e\,f(\Sigma_{r_1, r_2}),$ получаем требуемое
неравенство~(\ref{eq22}).

\medskip
Объединяя теперь~(\ref{eq20}), (\ref{eq21}) и~(\ref{eq22}),
получаем, что
\begin{equation}\label{eq24}
M(f(\Gamma(S_1, S_2, D)))\leqslant
\frac{C^{n-1}}{M^{n-1}(f(\Sigma_{r_1, r_2}))}\,.
\end{equation}
По лемме~\ref{lem4A} на основании~(\ref{eq24}) получаем:
\begin{equation}\label{eq25}
M(f(\Gamma(S_1, S_2, D)))\leqslant \frac{M\cdot
C^{n-1}}{\left(\int\limits_{\varepsilon}^{\varepsilon_0}
\frac{dr}{\Vert\,Q\Vert_{n-1}(r)}\right)^{n-1}}\,.
\end{equation}
Тогда из~(\ref{eq19}) и~(\ref{eq25}) вытекает, что
\begin{equation}\label{eq26}
M(f(\Gamma(S_1, S_2, D)))\leqslant \frac{M\cdot
C^{n-1}\cdot\omega_{n-1}}{\left(\int
\limits_{r_1}^{r_2}\frac{dr}{r{\widetilde{q}}^{1/(n-1)}_{p_0}(r)}\right)^{n-1}}\,.
\end{equation}
Тогда по предложению~\ref{pr1B} на основании~(\ref{eq26}) получаем,
что
\begin{equation}\label{eq30}
M(f(\Gamma(S_1, S_2, D)))\leqslant M\cdot C^{n-1}\cdot
M_2\cdot\int\limits_{\widetilde{A}} Q(p)\cdot
\eta^n((\widetilde{h}(p, p_0)))\ d\widetilde{v}(p)\,,
\end{equation}
где~$\eta: (r_1,r_2)\rightarrow [0,\infty]$ -- произвольная
измеримая по Лебегу функция, удовлетворяющая условию~(\ref{eq8AA}),
а $M_2$ -- постоянная, соответствующая неравенствам~(\ref{eq10B}).
Лемма доказана с учётом того, что переход к произвольной функции
$\eta$ из условия~(\ref{eq*3!!}) в неравенстве~(\ref{eq30})
достигается путём рассмотрения вспомогательной функции
$\eta^{\,\prime}:=\eta/I(r_1, r_2),$ где $I(r_1,
r_2)/\int\limits_{r_1}^{r_2}\eta(t)\,dt.$~$\Box$
\end{proof}

\medskip
Из теоремы~\ref{thOS4.1} и леммы~\ref{lem1} получаем следующе
важнейшее утверждение.

\medskip
\medskip
\begin{theorem}\label{th1}
{\sl\, Пусть $n\geqslant 3,$ $G$ -- некоторая группа мёбиусовых
автоморфизмов единичного шара, действующая разрывно в ${\Bbb B}^n$ и
не имеющая в ${\Bbb B}^n$ неподвижных точек. Пусть также группа
$G_*,$ соответствующая пространству ${\Bbb B}^n/G_*,$ состоит из
одного отображения $g(x)=x.$ Предположим, $D$ и $D_{\,*}$~---
области, принадлежащие ${\Bbb B}^n/G$ и ${\Bbb B}^n/G_*$
соответственно, при этом, $\overline{D}$ и $\overline{D_{\,*}}$
являются компактами.

\medskip
Пусть также $\varphi\colon(0,\infty)\rightarrow (0,\infty)$~---
неубывающая функция, удовлетворяющая условию
Кальдерона~(\ref{eqOS3.0a}), кроме того, функция $K^{n-1}_O(p, f)$
интегрируема в некоторой окрестности точки $p_0\in \overline{D}.$
Тогда каждый гомеоморфизм $f\colon D\rightarrow {\Bbb B}^n/G_*$
класса $W^{1,\varphi}_{\rm loc},$ имеющий конечное искажение,
является кольцевым $Q$-отображением в каждой точке
$p_0\in\overline{D}$ при $Q(p):=c^{n-1}\cdot  K^{n-1}_O(p, f),$ где
$c>0$ -- некоторая постоянная, а внешняя дилатация $K_O(p, f)$
отображения $f$ в точке $p$ определена соотношением~(\ref{eq1B}). }
\end{theorem}

\medskip
{\bf 6. Приложения. Локальное и граничное поведение отображений.}
Условимся говорить, что граница $\partial D$ области $D$ является
{\it сильно достижимой в точке $p_0\in
\partial D$}, если для каждой окрестности $U$ точки $p_0$ найдётся компакт
$E\subset D,$ окрестность $V\subset U$ этой же точки и число $\delta
>0$ такие, что для любых континуумов $E$ и $F,$ пересекающих как
$\partial U,$ так и $\partial V,$ выполняется неравенство
$M(\Gamma(E, F, D))\geqslant \delta.$ Мы также будем говорить, что
граница $\partial D$  является {\it сильно достижима}, если она
является сильно достижимой в каждой своей точке. Для множества
$E\subset {\Bbb B}^n/G,$ как обычно, обозначим
$$C(f, E)=\{p_*\in{\Bbb B}^n/G_*: \exists\,\, p_k\in D, p\in \partial E:
p_k\rightarrow p, f(p_k)\rightarrow p_*, k\rightarrow\infty\}\,.$$
Следуя~\cite[разд.~2]{IR} либо~\cite[разд.~6.1, гл.~6]{MRSY}, будем
говорить, что функция $\varphi\colon D\rightarrow{\Bbb R}$ имеет
{\it конечное среднее колебание} в точке $p_0\in \overline{D}$,
пишем $\varphi\in FMO(p_0),$ если
%
%
%
%\begin{equation}\label{eq29*!}
%
$$\limsup\limits_{\varepsilon\rightarrow
0}\frac{1}{\widetilde{h}(\widetilde{B}(p_0,
\varepsilon))}\int\limits_{\widetilde{B}(p_0,\,\varepsilon)}
|{\varphi}(p)-\overline{\varphi}_{\varepsilon}|\
d\widetilde{h}(p)<\infty\,,$$
%
%\end{equation}
%
где
$\overline{{\varphi}}_{\varepsilon}=\frac{1}
{\widetilde{h}(\widetilde{B}(p_0,
\varepsilon))}\int\limits_{\widetilde{B}(p_0, \varepsilon)}
{\varphi}(p) \,d\widetilde{h}(p).$ Справедливо следующее
утверждение.

\medskip
\begin{theorem}\label{th3}{\sl\, Пусть $n\geqslant 3,$ $G$ -- некоторая группа мёбиусовых
автоморфизмов единичного шара, действующая разрывно в ${\Bbb B}^n$ и
не имеющая в ${\Bbb B}^n$ неподвижных точек, при этом, группа $G_*$
состоит из одного отображения $g(x)=x.$ Пусть также $D$ и
$D_{\,*}$~--- области, принадлежащие ${\Bbb B}^n/G$ и ${\Bbb
B}^n/G_*$ соответственно, при этом, $\overline{D}$ и
$\overline{D_{\,*}}$ являются компактами, кроме того, область $D$
локально линейно связна в точке $b\in \partial D,$ а $\partial
D^{\,\prime}$ сильно достижима хотя бы в одной из точек $y\in C(f,
b).$

\medskip
Предположим, $\varphi\colon(0,\infty)\rightarrow (0,\infty)$~---
неубывающая функция, удовлетворяющая условию
Кальдерона~(\ref{eqOS3.0a}), кроме того пусть заданная функция
$Q:{\Bbb B}^n/G\rightarrow (0, \infty)$ интегрируема в некоторой
окрестности точки $p_0\in \overline{D}.$ Тогда, если $f$ --
гомеоморфизм области $D$ на $D_*$ класса $W^{1,\varphi}_{\rm loc},$
такой что $K^{n-1}_O(p, f)\leqslant Q(p)$ при почти всех $p\in D,$
при этом, $Q\in FMO(b),$ то $C(f, b)=\{y\}.$ }
\end{theorem}

\medskip
\begin{proof}
Пусть $\eta:(r_1, r_2)\rightarrow [0, \infty]$ -- произвольная
измеримая по Лебегу функция, удовлетворяющая условию
$\int\limits_{r_1}^{r_2}\eta(t)\,dt\geqslant 1.$ По
теореме~\ref{th1} отображение $f$ удовлетворяет соотношению
\begin{equation}\label{eq15A}
M(f(\Gamma(C_1, C_0, A)))\leqslant\int\limits_{A\cap D}c^*\cdot
Q(p)\cdot\eta^n(\widetilde{h}(p, p_0))\,d\widetilde{h}(p)
\end{equation}
для любых двух континуумов $C_0\subset \overline{\widetilde{B}(b,
r_1)},$ $C_1\subset {\Bbb B}^n/G\setminus \widetilde{B}(b, r_2),$
некотором $0<r_0<\infty$ и всех $0<r_1<r_2<r_0,$ где $c^*>0$ --
некоторая постоянная и
$$\widetilde{A}=\widetilde{A}(b, r_1, r_2)=\{p\in {\Bbb B}^n/G: r_1<d(p, p_0)<r_2\}\,.$$
Из~\cite[лемма~2.4]{Sev$_1$} вытекает, что
фактор-пространства~${\Bbb B}^n/G$ являются $n$-регулярными по
Альфорсу и, значит, имеют хаусдорфову размерность~$n$ (см., напр.,
\cite[пункт~8.7]{He}). В таком случае, необходимое заключение
вытекает из~\cite[теорема~5]{Sev$_2$}.~$\Box$
\end{proof}

\medskip
\begin{theorem}\label{th4}{\sl\, Теорема~\ref{th3} остаётся в силе, если в условиях
этой теоремы предположение $Q\in FMO(b)$ заменить предположением
\begin{equation}\label{eq31}
\int\limits_{\varepsilon}^{\varepsilon_0}\frac{dr}{rq^{1/(n-1)}_{p_0}(r)}\rightarrow
\infty\,,\quad \varepsilon\rightarrow 0\,,
\end{equation}
где $0<\varepsilon_0<\sup\limits_{p\in D}\widetilde{h}(p, p_0)$
 -- некоторое число и $q_{p_0}(r)$ определено в~(\ref{eq32*}).
}
\end{theorem}

\medskip
\begin{proof}
При каждом фиксированном $\varepsilon<\varepsilon_0,$
$\varepsilon>0,$ рассмотрим функцию
$I(\varepsilon, \varepsilon_0)=\int\limits
_{\varepsilon}^{\varepsilon_0}\psi(t)\,dt,$
где
%\begin{equation}\label{eq1*****A}
$$\psi(t)\quad=\quad \left \{\begin{array}{rr}
1/(tq^{\frac{1}{n-1}}_{p_0}(t))\ , & \ t\in (\varepsilon,
\varepsilon_0)\ ,
\\ 0\ ,  &  \ t\notin (\varepsilon,
\varepsilon_0)\ ,
\end{array} \right.$$
%\end{equation}
%
Здесь, как обычно, полагаем $a/\infty = 0$ при $a\ne\infty,$
$a/0=\infty $ при $a>0$ и $0\cdot\infty =0,$ см., напр.,
\cite[с.~18, $\S\,3,$ гл.~I]{Sa}). Заметим, что $I(\varepsilon,
\varepsilon_0)<\infty$ при всех $\varepsilon\in (0, \varepsilon_0),$
так как $K^{n-1}_O(p, f)\leqslant Q(p)$ и, значит, $Q(p)\geqslant 1$
почти всюду (откуда также вытекает, что и $q_{p_0}(t)\geqslant 1$
при почти всех $t\in (0, \varepsilon_0)).$

\medskip
Применяя аналог теоремы Фубини (лемму~\ref{lem2}), получаем, что
$$\int\limits_{\varepsilon<\widetilde{h}(p, p_0)<\varepsilon_0} Q(p)\cdot\psi^n(\widetilde{h}(p, p_0))\
d\widetilde{v}(p)\leqslant C_1\cdot
\int\limits_{\varepsilon}^{\varepsilon_0}
\int\limits_{\widetilde{S}(p_0, r)} Q(p)\cdot\psi^n(\widetilde{h}(p,
p_0))\,d\mathcal{H}^{n-1}_{\widetilde{h}}\,dr=$$
\begin{equation}\label{eq2.3.3}
=C_1\cdot\int\limits_{\varepsilon}^{\varepsilon_0}\omega_{n-1}r^{n-1}q_{p_0}(r)\psi^n(r)\,dr=
 \omega_{n-1}\cdot C_1\cdot
\int\limits_{\varepsilon}^{\varepsilon_0}\frac{dr}{rq_{p_0}^{\frac{1}{n-1}}(r)}\,,
\end{equation}
причём
$\int\limits_{\varepsilon}^{\varepsilon_0}\frac{dr}{tq_{p_0}^{\frac{1}{n-1}}(r)}
=o\left(I^n(\varepsilon,\varepsilon_0)\right)$
ввиду~(\ref{eq31}). Оставшаяся часть утверждения вытекает из
рассуждений, аналогичных доказательству теоремы~\ref{th3}, а также
ввиду~\cite[лемма~7]{Sev$_2$}.
\end{proof}~$\Box$

\medskip
Напомним, что семейство $\frak{F}$ отображений $f\colon X\rightarrow
{X}^{\,\prime}$ называется {\it равностепенно непрерывным в точке}
$x_0 \in X,$ если для любого $\varepsilon>0$ найдётся такое
$\delta>0$, что ${d}^{\,\prime}
\left(f(x),f(x_0)\right)<\varepsilon$ для всех таких $x,$ что
$d(x,x_0)<\delta$ и для всех $f\in \frak{F}.$ Говорят, что
$\frak{F}$ {\it равностепенно непрерывно}, если $\frak{F}$
равностепенно непрерывно в каждой  точке $x_0\in X.$ В качестве
одного из возможных приложений теоремы~\ref{th1} приведём также
следующее утверждение.

\medskip
\begin{theorem}\label{theor4*!} {\sl\, Пусть $n\geqslant 3,$ $G$ -- некоторая группа мёбиусовых
автоморфизмов единичного шара, действующая разрывно в ${\Bbb B}^n$ и
не имеющая в ${\Bbb B}^n$ неподвижных точек, при этом, группа $G_*$
состоит из одного отображения $g(x)=x.$ Пусть также $D$ -- область в
${\Bbb B}^n/G,$ такая, что $\overline{D}$ является компактом.
Предположим, $\varphi\colon(0,\infty)\rightarrow (0,\infty)$~---
неубывающая функция, удовлетворяющая условию
Кальдерона~(\ref{eqOS3.0a}), кроме того пусть заданная функция
$Q:{\Bbb B}^n/G\rightarrow (0, \infty)$ интегрируема в некоторой
окрестности точки $p_0\in \overline{D}.$

Пусть также $p_0\in D,$ $0<R<1,$ $B_R=\{x\in {\Bbb B}^n: |x|<R\},$ и
$Q:D\rightarrow [0, \infty]$ -- измеримая относительно меры
$\widetilde{v}$ функция.

Обозначим через $\frak{R}_{\varphi, Q, B_R, \delta}(D)$ семейство
всех гомеоморфизмов $f:D\rightarrow B_R$ класса~$W_{\rm loc}^{1,
\varphi}$ с конечным искажением для которых: 1) найдётся континуум
$K_f\subset B_R \setminus f(D),$ удовлетворяющий условию
$\sup\limits_{x, y\in K_f} h(x, y)\geqslant \delta>0;$ 2)
$K^{n-1}_O(p, f)\leqslant Q(p)$ для почти всех $p\in D.$ Тогда, если
$Q\in FMO(p_0),$ либо в точке $p_0$ выполнено условие
вида~(\ref{eq31}), то семейство отображений $\frak{R}_{\varphi, Q,
B_R, \delta}(D)$ является равностепенно непрерывным в точке $p_0\in
D.$ }
\end{theorem}

\medskip
{\it Доказательство} теоремы~\ref{theor4*!}, с учётом замечаний,
сделанных при доказательстве теорем~\ref{th3} и \ref{th4}, вытекает
из теоремы~\ref{th1} и \cite[теорема~1]{Sev$_2$}. Здесь следует
также учесть, что шар $B_R$ является регулярным по Альфорсу
пространством с $(1; n)$-неравенством Пуанкаре (см., напр.,
\cite[лемма~2.4]{Sev$_1$}, \cite[пример~8.24,
предложение~8.19]{He}).~$\Box$

\medskip
{\bf 7. Пример.} В качестве простой иллюстрации к
теореме~\ref{theor4*!}, рассмотрим следующее семейство отображений.
Опираясь на гомеоморфизм $h(x)=\frac{x}{|x|}\log\frac{1}{|x|},$
положим
$$
h_m(x)=\left\{
\begin{array}{rr}
\frac{x}{|(m-1)/m|}\cdot\log\left(\frac{e}{(m-1)/m}\right), & x\in B(0, (m-1)/m),\\
\frac{x}{|x|}\cdot\log\left(\frac{e}{|x|}\right), &  x\in {\Bbb
B}^n\setminus B(0, (m-1)/m)
\end{array}
\right.\,,
$$
где ${\Bbb B}^n=\{x\in {\Bbb R}^n: |x|<1\}.$

\medskip Предположим, $G$ -- разрывная группа мёбиусовых отображений
единичного шара ${\Bbb B}^n,$ $n\geqslant 2,$ на себя, не имеющая
неподвижных точек в ${\Bbb B}^n.$ Пусть $\pi:{\Bbb B}^n\rightarrow
{\Bbb D}/G$ -- естественная проекция ${\Bbb B}^n$ на ${\Bbb B}^n/G,$
и пусть точка $p_0\in {\Bbb B}^n/G$ такова, что $\pi(0)=p_0.$
Рассмотрим $r_0>0$ -- радиус шара с центром в точке~$p_0,$ целиком
лежащий в некоторой нормальной окрестности $U$ точки $p_0.$ Тогда по
определению естественной проекции $\pi,$  а также определению
гиперболической метрики $h$ и метрики~$\widetilde{h}$ в~(\ref{eq2})
мы имеем, что $\pi(B(0, r^{\,\prime}_0))=\widetilde{B}(p_0, r_0),$
где $r^{\,\prime}_0:=(e^{r_0}-1)/(e^{r_0}+1).$ В таком случае,
семейство отображений
$$
\widetilde{g}_m(y)=\left\{
\begin{array}{rr}
\frac{y}{(m-1)/m}\cdot\log\left(\frac{e}{(m-1)/m}\right),
& y\in B(0, (r^{\,\prime}_0(m-1))/m),\\
\frac{r^{\,\prime}_0y}{|y|}\cdot\log\left(\frac{er^{\,\prime}_0}{|y|}\right),
& y\in B(0, r^{\,\prime}_0)\setminus B(0, (r^{\,\prime}_0(m-1))/m)
\end{array}
\right.\,,
$$
является семейством автоморфизмов шара $B(0, r^{\,\prime}_0),$ при
этом, $\widetilde{g}_m|_{S(0, r^{\,\prime}_0)}=y.$

\medskip
Заметим, что $\widetilde{g}_m\in ACL({\Bbb B}^n).$ В самом деле,
отображения
$$\widetilde{g^{(1)}}_m(y)=\frac{y}{|(m-1)/m|\log\left(\frac{e}{(m-1)/m}\right)},
\quad m=1,2,\ldots,\,$$
являются отображениями класса $C^1,$ скажем, в шаре $B(0,
r^{\,\prime}_0(m-1)/m+\varepsilon)$ при малых $\varepsilon>0,$ а
отображения
$\widetilde{g^{(2)}}_m(y)=\frac{y}{|y|\log\left(\frac{e}{|y|}\right)}$
~--- отображениями класса $C^1,$ скажем, в кольце $B(0,
r^{\,\prime}_0\setminus B(0, r^{\,\prime}_0(m-1)/m)-\varepsilon).$
Отсюда вытекает, что гомеоморфизмы $\widetilde{g}_m$ являются
липшицевыми в $B(0, r^{\,\prime}_0)$ и, значит, $\widetilde{g}_m\in
ACL(B(0, r^{\,\prime}_0),$ см., напр., \cite[разд.~5]{Va}.

\medskip
Далее, согласно~\cite[пример~7, пункт~1.3.6]{Sev$_3$} (см. также
рассуждения, приведённые при
рассмотрении~\cite[предложение~6.3]{MRSY})
$$
\Vert \widetilde{g}^{\,\prime}_m(y)\Vert =\left\{
\begin{array}{rr}
\frac{1}{(m-1)/m}\cdot \log\left(\frac{e}{(m-1)/m}\right), & y\in B(0, r^{\,\prime}_0(m-1)/m),\\
\frac{r^{\,\prime}_0}{|y|}\cdot\log\frac{er^{\,\prime}_0}{|y|}, &
y\in B(0, r^{\,\prime}_0)\setminus B(0, r^{\,\prime}_0(m-1)/m)
\end{array}
\right.\,,$$
$$|J(y, \widetilde{g}_m)|=\left\{
\begin{array}{rr}
\frac{1}{(m-1)/m}\cdot\left(\log\left(\frac{e}{(m-1)/m}\right)\right)^n, & y\in B(0, r^{\,\prime}_0(m-1)/m),\\
\frac{(r_0{^{\,\prime})^n}}{|y|^n}\cdot\log^{n-1}\frac{er_0^{\,\prime}}{|y|},
& y\in B(0, r^{\,\prime}_0)\setminus B(0, r^{\,\prime}_0(m-1)/m)
\end{array}
\right.\,.
$$
Отсюда вытекает, что $\Vert
\widetilde{g}^{\,\prime}_m(y)\Vert\leqslant c_m$ при некоторой
постоянной $c_m>0$ и, значит, $\widetilde{g}^{\,\prime}_m\in W_{\rm
loc}^{1, 1}(B(0, r^{\,\prime}_0),$ так как
$\widetilde{g}^{\,\prime}_m\in ACL(B(0, r^{\,\prime}_0)$
(см.~\cite[теорема~2, пункт~1.1.3]{Ma}). Заметим, что $|\nabla
\widetilde{g}_m(y)|\leqslant n^{1/2}\cdot\Vert
\widetilde{g}^{\,\prime}_m(y)\Vert$ при почти всех $y\in B(0,
r^{\,\prime}_0).$ Пусть $\varphi\colon(0,\infty)\rightarrow
(0,\infty)$~--- неубывающая функция, удовлетворяющая условию
Кальдерона~(\ref{eqOS3.0a}). Тогда ввиду неубывания функции
$\varphi$
$$\int\limits_{B(0, r^{\,\prime}_0)}\varphi(|\nabla \widetilde{g}_m(y)|)\,dv(y)
\leqslant\varphi(n^{1/2}c_m)\cdot v(B(0, r^{\,\prime}_0))<\infty\,,$$
т.е., $\widetilde{g}_m\in W^{1, \varphi}(B(0, r^{\,\prime}_0)).$
Заметим, что отображения $\widetilde{g_m}$ имеют конечное искажение,
поскольку их якобиан почти всюду не равен нулю; кроме того,
$K_O^{n-1}(y, \widetilde{g_m})\leqslant Q(y),$ где
$Q(y)=\log^{n-1}\frac{er_0^{\,\prime}}{|y|}.$

\medskip
С другой стороны, по построению, существует и непрерывно обратное
отображение $\varphi=\pi^{\,-1}$ шара $\widetilde{B}(p_0, r_0)$ на
евклидов шар $B(0, r^{\,\prime}_0).$ В таком случае, положим
$$g_m(p)=(\widetilde{g}_m\circ \varphi)(p), \qquad p\in \widetilde{B}(p_0, r_0)\,.$$
Заметим, что отображения $g_m$ являются гомеоморфизмами
$\widetilde{B}(p_0, r_0)$ на евклидов шар $B(0, r^{\,\prime}_0)$
класса $W^{1, \varphi}(B(0, r^{\,\prime}_0)).$ Кроме того, имеем,
что $K_O^{n-1}(p, g_m)\leqslant \widetilde{Q}(p),$ где
$\widetilde{Q}(p)=\log^{n-1}\frac{er_0^{\,\prime}}{|\varphi(p)|}.$
См. рисунок~\ref{fig3} для иллюстрации.
\begin{figure}[h]
\centerline{\includegraphics[scale=0.42]{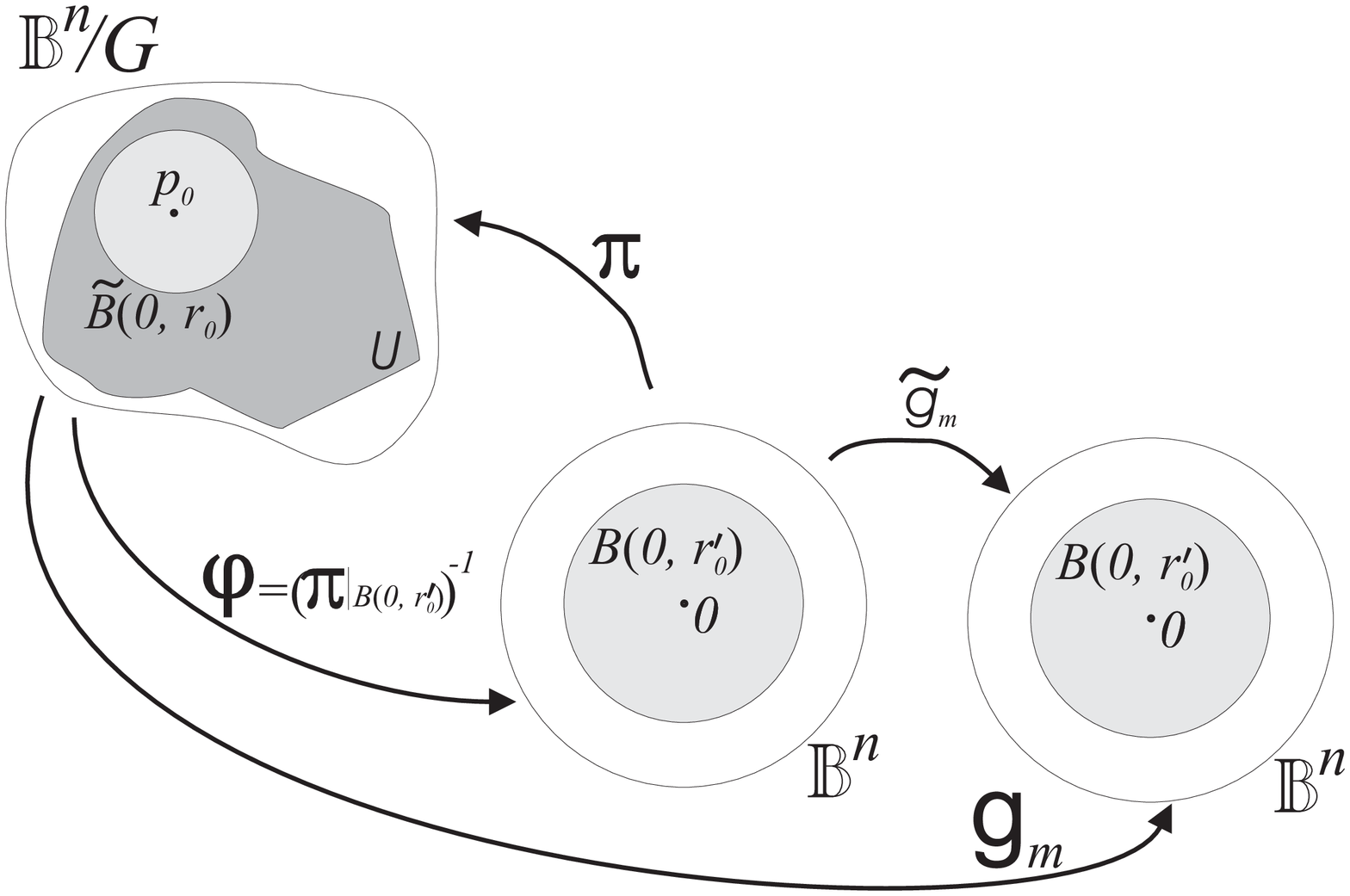}}
\caption{Конструкция, использованная при построении отображений
$g_m$ }\label{fig3}
\end{figure}
Согласно неравенств~(\ref{eq3B}) при некоторой положительной
постоянной $c*_1:=1/c_1\geqslant 1,$ зависящей только от~$r_0,$
будем иметь:
$$\log^{n-1}\frac{er_0^{\,\prime}}{|\varphi(p)|}\leqslant \log^{n-1}\left(\frac{c^*_1er_0^{\,\prime}}{h(\varphi(p),
0)}\right)=$$
$$=\log^{n-1}\left(\frac{C}{h(\varphi(p),
0)}\right) :=Q_1(p)\,,$$
где $C:=er_0^{\,\prime}c^*_1.$ Ввиду соотношений~(\ref{eq1C})
$$q^*_{p_0}(r)=\frac{1}{\omega_{n-1}r^{n-1}}
\int\limits_{\widetilde{S}(p_0,
r)}Q_1\,\mathcal{H}^{n-1}_{\widetilde{h}}=
\frac{1}{\omega_{n-1}r^{n-1}}\cdot
\log^{n-1}\left(\frac{C}{r}\right)\cdot \mathcal{H}^{n-1}_h(S_h(0,
r))\leqslant
$$
$$\leqslant \frac{1}{c^*_1\omega_{n-1}r^{n-1}}\cdot
\log^{n-1}\left(\frac{C}{r}\right)\cdot \mathcal{H}^{n-1}(S_h(0,
r))=\frac{1}{c^*_1\omega_{n-1}r^{n-1}}\cdot
\log^{n-1}\left(\frac{C}{r}\right)\cdot
\left(\frac{e^r-1}{e^r+1}\right)^{n-1}\leqslant $$
\begin{equation}\label{eq33}
\leqslant C_1\cdot \log^{n-1}\left(\frac{C}{r}\right)
\end{equation}
при некоторой постоянной $C_1=C_1(r_0)>0,$ поскольку по правилу
Лопиталя $\lim\limits_{r\rightarrow
+0}\frac{\left(\frac{e^r-1}{e^r+1}\right)}{r}=\frac{1}{2}.$
Выберем $0<\varepsilon_0<r^{\,\prime}_0.$ Тогда ввиду
соотношений~(\ref{eq33}) имеет место соотношение~(\ref{eq31}).

\medskip
Непосредственным образом можно проверить, что семейство отображений
$g_m$ является равностепенно непрерывным в точке~$p_0.$ Последнее
заключение вытекает также из теоремы~\ref{theor4*!}. Заметим, что
все условия этой теоремы выполнены, кроме того, что семейство
отображений $g_m$ действует в некоторый не принимает значения из
некоторого континуума $K_m,$ принадлежащего этому шару и имеющего
диаметр не меньший $\delta>0,$ $m=1,2,\ldots .$

\medskip
Для того, чтобы это последнее условие выполнялось, можно рассмотреть
сужение $g_m|_{\widetilde{B}(p_0, r^*_0)}$ данного семейства
отображений $g_m$ на меньший шар $\widetilde{B}(p_0, r^*_0),$
$r^*_0=r_0/2.$ Тогда семейство $g_m|_{\widetilde{B}(p_0, r^*_0)}$
удовлетворяет всем условиям теоремы~\ref{theor4*!}, в частности,
отображения этого семейства не принимают значения некоторого
фиксированного невырожденного континуума~$K\subset {\Bbb B}^n.$

КОНТАКТНАЯ ИНФОРМАЦИЯ

\medskip
\noindent{{\bf Евгений Александрович Севостьянов} \\
{\bf 1.} Житомирский государственный университет им.\ И.~Франко\\
кафедра математического анализа, ул. Большая Бердичевская, 40 \\
г.~Житомир, Украина, 10 008 \\
{\bf 2.} Институт прикладной математики и механики
НАН Украины, \\
отдел теории функций, ул.~Добровольского, 1 \\
г.~Славянск, Украина, 84 100\\
e-mail: esevostyanov2009@gmail.com}

\end{document}